\documentclass[a4paper,11pt]{article}

\textwidth=430pt
\hoffset=-40pt
\textheight=650pt 
\topmargin=-30pt 
\parskip=2pt

\setcounter{tocdepth}{3} 
\setcounter{secnumdepth}{4}

\usepackage{pstricks,pstricks-add,pst-math,pst-xkey}
\usepackage{amssymb,amsmath, amsthm}
\usepackage[T1]{fontenc}
\usepackage[latin1]{inputenc}
\usepackage[english]{babel}
\usepackage{amsfonts,amsbsy}
\usepackage{enumerate}
\usepackage{mathrsfs}
\usepackage{graphicx,psfrag}
\usepackage{epic}
\usepackage{stmaryrd}
\usepackage{longtable}
\delimitershortfall=-0.1pt

\usepackage[backref=page,bookmarksopen=true,dvipdfm]
{hyperref}

 

\theoremstyle{plain}
\newtheorem{proposition}{Proposition}[section]
\newtheorem{theorem}[proposition]{Theorem}
\newtheorem{lemma}[proposition]{Lemma}

\theoremstyle{definition}
\newtheorem{definition}[proposition]{Definition}

\theoremstyle{remark}

\newtheorem{remark}[proposition]{Remark}

\theoremstyle{plain}

\theoremstyle{definition}

\theoremstyle{remark}

\renewenvironment{proof}{\smallskip\noindent\emph{\textbf{Proof.}}\hspace{1pt}}%
{\hspace{-5pt}{\nobreak\quad\nobreak\hfill\nobreak$\square$\vspace{8pt}%
\par}\smallskip\goodbreak}

\newenvironment{proofof}[1]{\smallskip\noindent\emph{\textbf{Proof of #1.}}%
\hspace{1pt}}{\hspace{-5pt}{\nobreak\quad\nobreak\hfill\nobreak%
$\square$\vspace{8pt}\par}\smallskip\goodbreak}

{\hspace{-5pt}{\nobreak\quad\nobreak\hfill\nobreak$\square$\vspace{8pt}%
\par}\smallskip\goodbreak}

\renewcommand{\leq}{\leqslant}
\renewcommand{\geq}{\geqslant}

\newcommand{\Id}{\mathinner{\mathrm{Id}}}

\newcommand{\C}[1]{\mathscr{C}^{#1}}
\newcommand{\Cc}[1]{\mathscr{C}_c^{#1}}
\newcommand{\modulo}[1]{{\left|#1\right|}}
\newcommand{\norma}[1]{{\left\|#1\right\|}}
\newcommand{\Ref}[1]{{\rm(\ref{#1})}}
\newcommand{\reali}{{\mathbb{R}}}
\newcommand{\naturali}{{\mathbb{N}}}
\newcommand{\rpic}{{\mathbb{R}_+}}
\newcommand{\rpis}{{\mathbb{R}_+^*}}

\renewcommand{\epsilon}{\varepsilon}
\renewcommand{\phi}{\varphi}
\renewcommand{\L}[1]{{\mathbf{L}^#1}}
\renewcommand{\H}[1]{{\mathbf{H}^{{#1}}}}

\renewcommand{\div}{\mathinner{\rm div}}

\newcommand{\spt}{\mathop{\rm Supp}}

\newcommand{\pt}{\partial}

\newcommand{\lb}{\llbracket}
\newcommand{\rb}{\rrbracket}

\renewcommand{\d}[1]{\mathinner{\mathrm{d}{#1}}}
\newcommand{\D}[1]{\mathinner{\mathrm{D}{#1}}}


\newcommand{\g}{{\gamma_0}}
\newcommand{\e}{{\nu}}

\makeatletter

\@addtoreset{equation}{section}
\makeatother

\title{Global smooth solutions of Euler equations for Van der Waals gases}
\author{Magali Mercier\thanks{Universit\'e de Lyon, Universit\'e Lyon~1, \'Ecole
    centrale de Lyon, INSA de Lyon, CNRS, UMR5208, Institut Camille
    Jordan, 43 blvd du 11 novembre 1918, F-69622 Villeurbanne-Cedex,
    France}}
\begin{document}
\maketitle

\begin{abstract}
We prove  global in time existence of solutions of the Euler compressible equations for a Van der Waals gas when the density is small enough in $\H{m}$,  for $m$ large enough. To do so,  we introduce a specific symmetrisation  allowing areas of null density. Next, we make estimates in $\H{m}$, using for some terms the estimates done by M. Grassin, who proved the same theorem in the easier case of a perfect polytropic gas. We treat the remaining terms separately, due to their non-linearity.

  \medskip

\noindent\textit{2000~Mathematics Subject Classification:} 35L60, 35Q31, 76N10.

\medskip

\noindent\textit{Keywords:} Euler compressible equations, smooth solutions, special symmetrisation.

\end{abstract}

\section{Introduction}
We are interested in the Cauchy problem for Euler compressible equations,  describing the evolution of a gas whose thermodynamical and kinetic properties are known at time $t=0$. 

More specifically, we are concerned with the life span of smooth solutions.  Various authors, in particular T. C. Sideris \cite{Sideris85, Sideris97}, T. Makino, S. Ukai \& S. Kawashima \cite{MakinoUK}, J.-Y. Chemin \cite{chemin, chemin2} have given criteria for mathematical explosion. We know also that there exist global in time solutions for well chosen initial data.
Li Ta Tsien \cite{li}, D. Serre \cite{Sr97}, M. Grassin \cite{Grass} prove for example the global in time existence of regular solutions under some hypotheses of ``expansivity''.

Most of these results were obtained within the framework of Perfect Polytropic Gases. A natural question  is to determine whether these results  extend to more realistic gases, following for example the Van der Waals law. This law takes  into account the volume of  molecules, which is important  in physical situations like explosions or implosions.  In  such limits, the gas is highly compressed and the Van der Waals law fits better with the behaviour of real gases than the Perfect polytropic gases in such limits. 
The Van der Waals law is also used to modelise dusty gases, seen as  perfect gases  with dust pollution \cite{dustgret,dustsaff,steiner,dustvish}. This law is given by the relationship
\[
p(v-b)=\mathfrak{R}T\,,
\]
where $p$ is the pressure, $v$ the massic volume, $T$ the temperature and $b, \mathfrak{R}$ are given constants.

The addition of the covolume $b$, representing the compressibility limit of the molecules in the gas, modifies non-trivially the analysis of the Euler  
equations.  However, we are going to show the global in time existence of regular solutions thus generalising a theorem of M.~Grassin \cite{Grass}.

\begin{theorem}\label{thm:VdW}
Let $m>1+d/2$. Let $(\rho_0,u_0,s_0)$  be the initial conditions for the Cauchy problem  associated to the Euler compressible equations (\ref{eq:euler})  for a Van der Waals  gas  with constant $c_v$ and $c_v>0$. Let us assume  $0\leq \rho_0\leq 1/b$. Then  we can define $\g=1+\frac{ \mathfrak{R} }{c_v}$; furthermore there exists $\varepsilon_0>0$ such that if 
\begin{description}
\item[(H1)]  $\norma{(\pi_0, s_0)}_{\H{m}(\reali^d)}\leq \varepsilon_0$, where $\pi_0= \left(\frac{\rho_0}{1-b\rho_0}\right)^{\frac{\g-1}{2}} \exp(\frac{s_0}{c_v})$,
\item[(H2)] the initial speed $u_0$ belongs to the space $ X=\{z:\rpic\times \reali^d\to \reali^d;\D{z}\in \L\infty, \D{}^2 z\in \H{m-1}\}$,
\item[(H3)] there exists $\delta>0$ such that for all $ x \in \reali^d$, $\mathrm{dist}(\mathrm{Spec}(\D{u_0})(x),\reali_-)\geq \delta$,
\item[(H4)] the initial density $\rho_0$ and the initial entropy  $s_0$  have compact support,
\end{description}
then the problem
 \begin{equation}\label{eq:app}
 \left\{\begin{array}{cccl}
 \pt_t \bar{u} +(\bar{u}\cdot \nabla)\bar{u}&=&0 & \textrm{on } \reali_+\times \reali^d,\\
         \bar{u}(0,x)&=&u_0(x)& \textrm{on } \reali^d.
     \end{array}\right.
 \end{equation}
admits a global classical solution. If furthermore $\g=\frac{\e+1}{\e-1}$ with $\e\in \naturali$ and $\e\geq 2$, or if $\g$ and $m$ satisfy $\e=\frac{\g+1}{\g-1}\geq  m>1+\frac{d}{2}$,  then there exists a global classical solution $(\rho ,u,s)$ to  the Euler compressible equations (\ref{eq:euler}) satisfying
\[
( \left(\frac{\rho}{1-b\rho}\right)^{\frac{\g-1}{2}}, u-\overline u, s)\in \C0(\rpic;\H{m}(\reali^d;\reali^{d+2}))\cap \C1(\rpic;\H{m-1}(\reali^d;\reali^{d+2}))\,.
\]
\end{theorem}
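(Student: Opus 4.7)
The plan is to treat the two assertions separately, building the full-Euler solution as a perturbation of the ``background'' free-transport solution $\bar u$.

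\emph{Step 1: global solvability of \eqref{eq:app}.} The equation for $\bar u$ is the pressureless Burgers system, which is solved by the method of characteristics: along the flow $X(t,x)=x+tu_0(x)$ one has $\bar u(t,X(t,x))=u_0(x)$. Global existence of a classical solution is equivalent to the fact that $X(t,\cdot)$ remains a $\C1$-diffeomorphism for every $t\geq 0$, i.e.\ that $\det(I+t\D{u_0}(x))$ stays bounded away from~$0$. This is exactly what hypothesis \textbf{(H3)} gives: if the spectrum of $\D{u_0}(x)$ stays at distance $\geq\delta$ from $\reali_-$, then $I+t\D{u_0}(x)$ is uniformly invertible and one gets the crucial dispersion estimate
\[
\D{\bar u}(t,y)=\D{u_0}(x)\bigl(I+t\D{u_0}(x)\bigr)^{-1},\qquad \norma{\D{\bar u}(t,\cdot)}_{\L\infty}\leq \frac{C}{1+t}.
\]
One also controls higher derivatives $\D{}^k\bar u$ in $\L\infty\cap \H{m-1}$ through \textbf{(H2)}, following Grassin's argument. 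This is standard and essentially borrowed.

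\emph{Step 2: symmetrisation adapted to the Van der Waals law.} For the second part of the theorem, the central point is to find the right analogue of the Makino--Ukai--Kawashima change of variable. The Van der Waals pressure law $p(v-b)=\mathfrak{R}T$ together with $c_v$ constant gives an internal energy yielding $p=A(s)(\rho/(1-b\rho))^\g$ for some positive $A$. Setting
\[
\pi=\Bigl(\tfrac{\rho}{1-b\rho}\Bigr)^{\frac{\g-1}{2}}\exp\bigl(\tfrac{s}{c_v}\bigr)
\]
one checks that the system for $(\pi,u,s)$ takes a symmetric hyperbolic form in which the pressure gradient is encoded by smooth functions of $\pi$ vanishing like a power at $\pi=0$; this is precisely what allows compactly supported densities \textbf{(H4)}. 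The additional factor $(1-b\rho)^{-1}$ is the only source of novelty with respect to the polytropic case and is harmless as long as $\rho$ stays strictly below $1/b$, which is guaranteed by the smallness of $\pi$ in $\H{m}$ (here $m>1+d/2$ and Sobolev embedding).

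\emph{Step 3: energy estimates for the perturbation.} Writing $u=\bar u+v$, one obtains a symmetric hyperbolic system for the unknown $U=(\pi,v,s)$ of the schematic form
\[
\pt_t U + \sum_j A_j(\bar u,U)\pt_j U + B(\D{\bar u})U = \mathcal{N}(U,\D{U}),
\]
where $B$ is linear in $\D{\bar u}$ and $\mathcal{N}$ is a nonlinearity at least quadratic in $U$ or involving $U\D{U}$. Following Grassin, I would introduce a weighted energy $E_m(t)=\norma{U(t)}_{\H{m}}^2$ and a companion quantity controlling the $\L\infty$ norm of $\D{U}$, then run standard $\H{m}$ commutator estimates after differentiating the symmetrised system. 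The term $B(\D{\bar u})U$ is the potentially dangerous linear perturbation; thanks to Step~1, $\norma{\D{\bar u}(t)}_{\L\infty}\leq C/(1+t)$, so this term contributes a Gronwall factor of order $(1+t)^C$, which is absorbed provided $\varepsilon_0$ is chosen small enough.

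\emph{Step 4: the nonlinear pressure term.} The main obstacle, and the reason for the technical assumption $\e=\frac{\g+1}{\g-1}\in\naturali$ or $\e\geq m$, lies in the terms coming from $\nabla p/\rho$: rewritten via~$\pi$, they become a power $\pi^{2/(\g-1)}=\pi^{\e-1}$ multiplied by a smooth function of $\pi$ and $s$. To bound this in $\H{m}$ by a Moser-type product estimate, one needs the map $z\mapsto z^{\e-1}$ to be $\C{m}$ near the origin, which holds exactly when $\e-1\geq m$ (the case $m\leq \e$) or when $\e$ is an integer so that the map is polynomial. Under this regularity one closes the estimates by a standard continuation argument: an a priori bound $E_m(t)\leq C\varepsilon_0^2$ for $\varepsilon_0$ small enough, together with local existence, yields the global classical solution with the claimed regularity $(\pi,u-\bar u,s)\in \C0(\rpic;\H{m})\cap \C1(\rpic;\H{m-1})$.
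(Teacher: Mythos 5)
Your overall architecture (symmetrisation in the variable $\pi$, perturbation around $\bar u$, Moser-type estimates explaining the restriction on $\e$) matches the paper's, and Steps 1, 2 and 4 are essentially right. But Step 3 contains a genuine gap that would prevent the argument from closing. You treat the linear term $B(\D{\bar u})U$ as the ``potentially dangerous'' perturbation and propose to absorb a Gronwall factor of order $(1+t)^C$ by smallness of $\varepsilon_0$. This is backwards, and it does not work: a bound of the form $E_m(t)\leq C\varepsilon_0^2(1+t)^C$, or even a uniform bound $E_m(t)\leq C\varepsilon_0^2$, is not enough to control the nonlinearities globally in time, since $\int_0^\infty \norma{U(t)}_{\H{m}}^2\,\dt$ diverges if the norm is merely bounded, and the new Van der Waals term carries a coefficient growing like $(1+t)^{\e-1}$. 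The actual mechanism is that \textbf{(H3)} makes $\D{\bar u}(t,\cdot)=\frac{\Id}{1+t}+O\left((1+t)^{-2}\right)$ a positive, expansive field, so $B$ produces a \emph{damping} contribution $-\frac{k+r}{1+t}Y_k^2$ in the energy identity for $Y_k=\norma{\D{}^k U}_{\L2}$. One then introduces the weighted norm $Z(t)=\sum_k (1+t)^{g_k}Y_k(t)$ with $g_k=k-d/2-1$ chosen so that all terms share the same decay rate, obtains a differential inequality $\frac{\d{Z}}{\d{t}}+\frac{a}{1+t}Z\leq C\left(Z^2+\frac{Z}{(1+t)^2}+(1+t)^{\e-1}Z^{\e+1}\right)$ with $a>1$, and closes by an ODE comparison for $\zeta=(1+t)^a e^{C/(1+t)}Z$. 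The resulting decay $Z\leq C(1+t)^{-a}$ is what makes the nonlinear terms time-integrable; the smallness of $\varepsilon_0$ enters only to keep $\zeta(0)$ in the basin where the comparison function $f(x)=\frac{1}{\e}\ln\left(\frac{x^\e}{1+x^\e}\right)$ can be inverted. Without identifying $B$ as the source of decay rather than of growth, your continuation argument cannot close.

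A second, smaller omission: since $u_0$ only lies in $X$ (its gradient is bounded, but $u_0$ itself need not belong to any Sobolev space), local existence cannot be obtained by applying symmetric-hyperbolic theory directly to the data $(\pi_0,u_0,s_0)$. The paper cuts off $u_0$ with a plateau function $\phi$, solves with data $(\pi_0,\phi u_0,s_0)\in\H{m}$, and glues the resulting solution with $(0,\overline u)$ outside a cone, using finite propagation speed, the compact support hypothesis \textbf{(H4)} and a local uniqueness proposition. Your plan should make this construction explicit, since it is also what guarantees that $u-\overline u$, rather than $u$ itself, lies in $\H{m}$.
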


To do so, we have first to extend to Van der Waals gases a symmetrisation obtained by Makino, Ukai \& Kawashima \cite{MakinoUK}, which allows null density areas. Next we will derive energy estimates in $\H{m}$. 

Since the Van der Waals gases have a behaviour close to perfect polytropic gases for weak densities, this result is not much surprising. However, the non linear terms introduced by the  Van der Waals law have to be  treated carefully.

In Section \ref{sec:thermo}, we  describe the thermodynamic properties of a compressible gas and we state some important properties such as the Friedrichs' symmetrisation. In Section \ref{sec:proof}, we give the detailed proof of this result and in Section \ref{sec:tec} we give the proofs of some technical lemmas used in Section \ref{sec:proof}.

\section{Thermodynamic and first properties}\label{sec:thermo}
\subsection{Conservation law}
Compressible fluid dynamics, without viscosity or heat transfer, is described by the Euler equations, which are made of the conservation of mass, of momentum and of energy (see \cite[chap. 2]{Groot}) : 
\begin{equation}\label{eq:euler}
\left\{
\begin{array}{l}
\pt_t \rho + \mathrm{div}(\rho {u})=0 \,,\\
\pt_t  q + \mathrm{div}(\rho
{u}\otimes {u}) + \nabla p =0\,, \\
\pt_t E +
\mathrm{div}\left(\left(E
+ p\right){u} \right)=0\,,
\end{array}
\right. 
\end{equation}
where $\rho $  is the mass of the fluid per unit of volume,  $q=\rho u$ is the momentum per unit of volume and $E=\frac{1}{2}\rho u^2 +\rho e$  is the total energy per unit of volume, sum of the kinetic energy and internal energy. This is a system of  $(d+2)$ equations and $(d+3)$ unknowns:  the density  $\rho \in \rpic$,  the speed $u\in \reali^d$, the internal energy  $e\in \reali$ and the pressure $p\in \reali$.  In order to complete this system, we have to add a state law, for example an incomplete state law, also called \emph{pressure law} $(\rho,e )\mapsto p(\rho, e)$.

\begin{definition}
We call \emph{Euler compressible equations} the system made of the conservation laws (\ref{eq:euler}) and of an incomplete state law $p=p(\rho, e)$.
\end{definition}

A simplified model is often considered, conserving only the conservation of mass and momentum, assuming that the fluid is isentropic. This simplified system is 
\begin{equation}\label{eq:euler_isentropique}
\left\{
\begin{array}{l}
\pt_t \rho + \mathrm{div}(\rho {u})=0\,, \\
\pt_t  q + \mathrm{div}(\rho
{u}\otimes {u}) + \nabla p =0\,, \\
\end{array}
\right. 
\end{equation}
and the state law is a given function $\rho \mapsto p(\rho)$. 

\begin{definition}
We call \emph{isentropic Euler equations} the system (\ref{eq:euler_isentropique})  with a given state law $p=p(\rho)$ such that
\[
p=-\left.\frac{\pt e}{\pt v}\right|_s\;,
\]
where $v=1/\rho$  is the specific volume, $T$ is the temperature, $s$ is the specific entropy and   $f=e-Ts$  is the specific free energy, assuming  we are given a complete  equation of state $(v,s)\mapsto e(v,s)$.
\end{definition}
  The thermodynamical variables $v, s, e, T, p$  must satisfy some relations described in Section \ref{sec:lois}.
The Euler equations are a system of first order conservation law, whose study is developed in particular in the books \cite{DafermosBook,Sr1, Sr2}.

\subsection{State law}\label{sec:lois}
The state law has a strong influence on the mathematical analysis of the compressible Euler equations. The state law of ``real'' gases can reveal  particular behaviour and introduce existence and/or uniqueness troubles which do not appear for perfect gases, see \cite{MenikoffPlohr}. We describe below the physical principles  a state law has to satisfy.

\subsubsection{Definitions}
We consider a fluid, whose internal energy is a regular  function of its specific volume\footnote{specific is a synonym of  massic} $v=1/\rho$ and of its specific entropy $s$.  This means that the gas is entitled with a \emph{complete} state law, or energy law $e=e(v,s)$. The fundamental thermodynamic principle is then
\begin{equation}\label{eq:fond}
\mathrm{d}e=-p\mathrm{d}v +T\mathrm{d}s\,
\end{equation}
where $p$ is the pressure and $T$ the temperature of the gas. Consequently, the pressure $p$ and the temperature $T$ can be defined as
\begin{align}
p&=-\left.\frac{\pt e}{\pt v}\right|_s\,,&
T&=\left.\frac{\pt e}{\pt s}\right|_v\,, \label{eq:pT}
\end{align}
where the notation $|$ precises the variable maintained constant in the partial derivation.

The greater order derivatives of $e$ have also an important role; we introduce the following adimensional quantities:
\begin{align}
\gamma&=-\dfrac{v}{p}\left.\dfrac{\pt p}{\pt v}\right|_s \,, &
\Gamma&=-\frac{v}{T}\left.\frac{\pt T}{\pt v}\right|_s\,, &
\delta&=\frac{pv}{T^2}\left.\frac{\pt T}{\pt s}\right|_v\,, &
\mathscr{G}&=-\frac{v}{2}\frac{\left.\frac{\pt^3 e}{\pt v^3}\right|_s}{ \left. \frac{\pt ^2 e}{\pt v^2} \right|_s}\,.
\label{eq:scrG}
\end{align}
The  coefficient $\gamma$ is called the \emph{adiabatic exponent},  and  $\Gamma$ is the  \emph{Grüneisen coefficient}.
The quantities $\gamma,\delta, \Gamma$ and $\mathscr{G}$ characterise the geometrical properties of the isentropic curves  in the $(v,p)$ plan (see \cite{MenikoffPlohr}). They can be expressed in function of $e$ through the relationships:
\begin{align*}
\gamma&= \frac{v}{p}\frac{\pt^2 e}{\pt v^2}\,,
&\Gamma&= -\frac{v}{T}\frac{\pt^2 e}{\pt s\pt v}\,, &
 \delta&=\frac{pv}{T^2}\frac{\pt^2 e}{\pt s^2}\,.
\end{align*}
 
We also introduce the \emph{calorific capacity at constant volume} $c_v$   and the \emph{calorific capacity at constant pressure} $c_p$ by
\begin{align}
c_v&=\left.\dfrac{\partial e}{\pt T}\right|_v =\frac{T}{\left.\frac{\pt^2 e}{\pt s^2}\right|_v}\,, &
c_p&=T\left.\frac{\pt s}{\pt T}\right|_p\,. \label{eq:cp}
\end{align}
These two quantities are linked with  $\frac{p v}{T}$ and with $\gamma$, $\delta$, $\Gamma$ through
\begin{equation}\label{eq:deltacv}
 \delta c_v=\frac{pv}{T}\,,\qquad\qquad c_p=\frac{pv}{T}\frac{\gamma}{\gamma \delta -\Gamma^2}\,.
\end{equation}

The quantity $\gamma_*=\frac{c_p}{c_v}$ can besides be expressed as  $\gamma_*=\frac{\gamma \delta}{\gamma\delta -\Gamma^2}$.  It is not equal to   $\gamma$  in the general case, but for an ideal gas we have $\delta=\Gamma=\gamma-1$, so that $\gamma_*=\gamma$.

\subsubsection{Thermodynamical constraints}\label{sec:constraint}
It is very natural to assume $v\geq 0$.  We assume furthermore  that the pressure $p$ and the temperature  $T$ are positive, which imposes that $e$ is a function increasing in $T$ and decreasing in $v$.

A classical thermodynamical hypothesis requires furthermore $e$ to be a convex function of $s$ and $v$, which means:
\begin{align*}
 \gamma\delta-\Gamma^2&\geq 0\,, &\delta&\geq 0 \,,& \gamma&\geq 0\,.
\end{align*}
In particular,  $\gamma\geq 0$ means that $p$ increases with the density $\rho= 1/v$, which allows us to define the \emph{adiabatic sound speed} by 
\begin{equation}\label{eq:c}
c= \sqrt{\left.\frac{\pt p}{\pt \rho}\right|_s} =\sqrt{\gamma\frac{p}{\rho}}\,.
\end{equation}
Then, we show that $\mathscr{G}$   can be expressed in function of $\rho$ and $c$ through the expression $\displaystyle \mathscr{G}=\frac{1}{c}\left. \frac{\pt(\rho c)}{\pt \rho}\right|_s$. 

%

Furthermore, we require usually $\Gamma>0$ and $\mathscr{G}>0$.
 The  condition $\Gamma>0$  is not thermodynamically required but is satisfied for many gases and ensures that the isentropes do not cross each other in the $(v,p)$ plan. 
The condition $\mathscr{G}>0$ means that the isentropes are strictly convex in the $(v,p)$ plan.

\subsubsection{Van der Waals Gas}
\begin{definition}
A gas is said to follow the \emph{Van der Waals} law, if it satisfies the following pressure law:
\begin{equation}\label{eq:VdW}
p\left(v- b \right)=\mathfrak{R}T\,,
\end{equation}
where $v$ is the massic\footnote{also called \emph{specific}} volume and $b$ is the \emph{covolume}, representing the compressibility limit of the fluid, due to the volume of the molecules.
\end{definition}
The Van der Waals law is  a modification of  the perfect gas law, in which $b=0$. In opposition to the perfect gas law, it  takes into account  the proper size of the molecules, which is important in some situations when the gas is strongly compressed. In this model,  the density must be bounded and the maximal density is  $\rho_{max}=\frac{1}{b}$.   

The fundamental relationship (\ref{eq:fond}) gives us  the PDE: $\pt_v e+\frac{\mathfrak{R}}{v-b}\pt_s e=0$. 
Thus, we introduce new variables  $w=(v-b)^{-\mathfrak{R}}$, $\sigma=(v-b)^{-\mathfrak{R}}\exp(s)$ and $\hat e(w,\sigma)=e(v,s)$.  We obtain $\pt_w\hat e=0$, so that  $e=\mathcal{E}((v-b)^{-\mathfrak{R}}\exp(s))$ for any regular function  $\mathcal{E}$. 

If we assume furthermore that $c_v$ is  constant, thanks to the definition of $c_v$ and (\ref{eq:fond}), we get that $\left.\frac{\pt^2 e}{\pt s^2}\right|_v=\frac{1}{c_v}\left.\frac{\pt e}{\pt s}\right|_v$, hence $\sigma \mathcal{E}''=(\frac{1}{c_v}-1)\mathcal{E}'$ and  $\mathcal{E}(\sigma)=C\sigma^{1/c_v}$ which leads to:
\begin{align*}
e&=(v-b)^{-\frac{\mathfrak{R}}{c_v}}\exp(\frac{s}{c_v})\,,& p&= \frac{\mathfrak{R}}{c_v}\frac{e}{v-b}\,.
\end{align*}
Some computations allow us finally to obtain
\begin{align}
\gamma &=\gamma_0\frac{v}{v-b},& \Gamma=\delta&=(\gamma_0-1)\frac{v}{v-b},&\mathscr{G}&=\frac{\gamma_0+1}{2}\frac{v}{v-b}\,, \nonumber
\end{align}
where
\begin{equation}\label{eq:g}
\g=\frac{\mathfrak{R}}{c_v}+1\,.
\end{equation}
The conditions of Section \ref{sec:constraint}  are then satisfied for $\gamma_0>1$.

\begin{remark}
\begin{enumerate} 
\item  A perfect gas can be seen as a Van der Waals gas with $b=0$. A perfect gas for which $c_v$ is constant is called  \emph{polytropic}. 
\item  The Van der Waals law coincides with the \emph{dusty gas} law \cite{dustgret,JenaSharma,dustsaff,steiner,dustvish}.  In this model, we consider that the gas is perfect but polluted by dust particles that are equidistributed and have a non-negligible volume.
\end{enumerate}
\end{remark}

Very often in the literature, the perfect polytropic gases are considered as a canonical example. However, their adequation with physical observations is not as good as for Van der Waals gases, for example in explosion phenomena, or in the sonoluminescence phenomenon \cite{sonolum, Evans, Lauterborn}.
%

In the following, we only consider Van der Waals fluids with constant  and strictly positive calorific capacity $c_v$:
\begin{equation}\label{eq:gpos}
c_v>0\,,
\end{equation}
which implies $\g :=\frac{\mathfrak{R}}{c_v}+1>1$.

\subsection{Symmetrisation}\label{sec:sym}
An important property of the Euler equation is their symmetrisability.
\subsubsection{General case, without vacuum}\label{sec:sym_ssvide}
\noindent If $\rho>0$ and $\left.\frac{\pt p}{\pt \rho}\right|_s>0$ then the system (\ref{eq:euler}) can be written in the variables $(p,\rho, s)$. Then, the system is almost symmetric, since it can be written matricially $\pt_t \tilde V +\sum_k \tilde A_k(\tilde V) \pt_k \tilde V=0$, with $\tilde V=(p,u^{\mathbf{T}},s)^{\mathbf{T}}$ and
\[
\tilde A(\xi, V)= \sum_k\xi_k \tilde A_k (\tilde V)=\left(
\begin{array}{ccc}
u\cdot \xi&\rho c^2 \,\xi^{\mathbf{T}}&0\\
\frac{1}{\rho}\xi&u\cdot \xi I_d&0\\
0&0&u\cdot \xi
\end{array}
\right)\,.
\]

This matrix is almost symmetric since we obtain a symmetric matrix by multiplying it on the left by $D:=\mathrm{Diag}\left(\frac{1}{\rho c^2}, \rho,\ldots,\rho,1\right)$. Consequently, we have  the following
\begin{proposition}\label{prop:sym1}
The system (\ref{eq:euler}) is Friedrichs symmetrisable when  $(\rho,u,s)$  takes values in a compact subset of $\rpis\times\reali^d\times\reali$.
\end{proposition}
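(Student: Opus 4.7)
The plan is to carry out a change of dependent variables from the conservative set $(\rho,q,E)$ to the primitive set $(p,u,s)$, and then verify directly that $D$ as defined in the paragraph above the statement is a positive-definite Friedrichs symmetriser for the resulting quasi-linear first-order system. The statement is essentially a matter of checking that the computation already sketched in the text is legitimate on the prescribed domain.

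First I would derive the primitive form. For smooth solutions with $\rho>0$, the mass equation directly yields $\partial_t \rho+u\cdot\nabla\rho+\rho\,\div u=0$; subtracting $u$ times the mass equation from the momentum equation gives $\partial_t u+(u\cdot\nabla)u+\rho^{-1}\nabla p=0$; and combining the energy equation with the previous two (using the fundamental thermodynamic identity \eqref{eq:fond}) gives the entropy transport $\partial_t s+u\cdot\nabla s=0$, which is classical for smooth flows. The key step is to eliminate $\rho$ in favour of $p$: differentiating the complete state law $p=p(\rho,s)$ along trajectories and using $c^2=\left.\pt p/\pt\rho\right|_s$ from \eqref{eq:c}, together with $\nabla p=\left.\pt p/\pt\rho\right|_s\nabla\rho+\left.\pt p/\pt s\right|_\rho\nabla s$, yields
\[
\pt_t p+u\cdot\nabla p+\rho c^2\,\div u=0.
\]
These three equations stacked together produce exactly the matrix $\tilde A(\xi,\tilde V)$ displayed in the statement, with $\tilde V=(p,u^{\mathbf{T}},s)^{\mathbf{T}}$.

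Next I would verify the symmetrisation by a direct block computation. Multiplying $\tilde A(\xi,\tilde V)$ on the left by $D=\mathrm{Diag}(1/(\rho c^2),\rho,\ldots,\rho,1)$ produces the matrix
\[
D\tilde A(\xi,\tilde V)=\begin{pmatrix} (u\cdot\xi)/(\rho c^2) & \xi^{\mathbf{T}} & 0\\ \xi & \rho(u\cdot\xi)\,I_d & 0\\ 0 & 0 & u\cdot\xi\end{pmatrix},
\]
which is manifestly symmetric for every $\xi\in\reali^d$ and every $\tilde V$. Finally, on a compact subset $K\subset\rpis\times\reali^d\times\reali$ the density is bounded below by some $\rho_{\min}>0$, and by the thermodynamical constraints of Section \ref{sec:constraint} together with the Van der Waals formula for $\gamma$ derived earlier, one has $c^2=\gamma p/\rho\geq c_{\min}^2>0$ on $K$; hence $D$ is continuous, symmetric, and bounded below and above by positive multiples of the identity on $K$, i.e.\ a uniformly positive-definite symmetriser.

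There is no serious obstacle: the only care needed is the passage from the conservative variables of \eqref{eq:euler} to the primitive variables $(p,u,s)$, which is legitimate because we are working in the open region $\rho>0$ where the Jacobian of the change of variables is non-degenerate and $p=p(\rho,s)$ is locally invertible in $\rho$ thanks to $c^2>0$. The rest is block-matrix verification and a compactness argument on $K$.
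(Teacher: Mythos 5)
Your proposal is correct and follows essentially the same route as the paper, which simply records that left-multiplication of $\tilde A(\xi,\tilde V)$ by $D=\mathrm{Diag}\bigl(1/(\rho c^2),\rho,\ldots,\rho,1\bigr)$ yields a symmetric matrix with $D$ symmetric positive definite on the compact set; your derivation of the primitive form and the compactness argument for uniform positivity just make explicit what the paper leaves implicit. The only cosmetic point is that the strict lower bound on $c^2$ comes from the standing hypothesis $\left.\pt p/\pt\rho\right|_s>0$ of Section \ref{sec:sym_ssvide} together with continuity and compactness, rather than from the Van der Waals formula, since the proposition is stated for a general state law.
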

 Indeed, for such values of   $(\rho,u,s)$,  $D\tilde A(\xi,V)$ is symmetric  with $D$ symmetric definite positive.

\subsubsection{Van der Waals gas}\label{sec:symMUK}
We are not completely satisfied with the previous formulation as it does not authorise $\rho$ to vanish, or even to tend to 0 at infinity. For example, $\rho$ cannot be taken into $\H{m}$ for $m\geq 0$.
Makino et al.   have introduced in \cite{MakinoUK}  a symmetrisation for perfect polytropic gases allowing the  null density areas. We generalise here their method to the case of Van der Waals gases.

First, let us  remind that for Van der Waals gases, we have  $\gamma=\frac{\g}{1-b\rho}$ and $p=(\g-1)\left(\frac{\rho}{1-b\rho}\right)^{\g}\exp\left(\frac{s}{c_v}\right)$. We now introduce the new variable
\[
 \pi =2\sqrt{\frac{\g}{\g-1}} \left(\frac{p}{\g-1}\right)^{\frac{\g-1}{2\g}}\,,
\]
and we re-write the system (\ref{eq:euler}) in the variables $(\pi, u,s)$. In order to do that, we first write the system (\ref{eq:euler}) in $(p,u,s)$ variables:
\begin{equation}\label{eq:presq_sym}
\left\{
\begin{array}{l}
\pt_t p + u\cdot \nabla p +\rho c^2 \div u=0 \,,\\
\pt_t  u + (u\cdot\nabla )u + \frac{1}{\rho}\nabla p=0\,,  \\
\pt_t s +
u\cdot\nabla s=0\,.
\end{array}
\right.
\end{equation}

Since $\pi=f(p)=Cp^\alpha$,  it is sufficient to multiply the first line by  $f'(p)=C\alpha p^{\alpha-1}$  to obtain an equation in $\pi$:
\[
\left\{
\begin{array}{rcl}
\pt_t \pi+u\cdot \nabla \pi +C\alpha p^{\alpha-1}\rho c^2  \nabla\cdot u &=&0\,,\\
\pt_t u +(u\cdot \nabla)u +\frac{1}{\rho C\alpha p^{\alpha-1}} \exp(\frac{s}{\g c_v})\nabla \pi &=&0\,,\\
\pt_t s +u\cdot \nabla s&=&0\,.
\end{array}
\right.
\]
Besides, we know that $c^2=\frac{\g}{1-b\rho}\frac{p}{\rho}$, $C=2\sqrt{\frac{\g}{\g-1}}(\frac{1}{\g-1})^{\frac{\g-1}{2\g}}$ and $\alpha=\frac{\g-1}{2\g}$. It remains to evaluate the coefficients:
\begin{align*}
C\alpha p^{\alpha-1}\rho c^2&=\frac{\g \alpha}{1-b\rho}\pi\\ 
&=\frac{\g-1}{2}\frac{\pi}{1-b\rho}\,,\\
\frac{1}{\rho C\alpha p^{\alpha-1}}&=\frac{\exp(s/(\g c_v)) }{1-b\rho}\frac{(\g-1)^{1/\g}}{\alpha C^{(\g-1)/(\g \alpha)}}\pi^{\frac{1-\alpha-1/\g}{\alpha}}\\
&=\exp\left(\frac{s}{\g c_v}\right)\frac{\g-1}{2}\frac{\pi}{1-b\rho}\,.
\end{align*}
Thus,
\begin{equation}\label{eq:symMUK}
\left\{
\begin{array}{rcl}
\pt_t \pi+u\cdot \nabla \pi +\frac{\g-1}{2}\frac{\pi }{1-b\rho} \nabla\cdot u &=&0\,,\\
\pt_t u +(u\cdot \nabla)u +\frac{\g-1}{2}\frac{\pi}{1-b\rho} \exp(\frac{s}{\g c_v})\nabla \pi &=&0\,,\\
\pt_t s +u\cdot \nabla s&=&0\,.
\end{array}
\right.
\end{equation}
Moreover, $1/(1-b \rho)=1+b\left(\frac{\g-1}{4\g}\right)^{\frac{1}{\g-1}} \exp\left(\frac{-s}{\g c_v}\right)\pi^{\frac{2}{\g-1}}$.  Therefore, denoting
\begin{equation}\label{eq:exposant}
\e=\frac{\g+1}{\g-1}>1\,, \qquad \textrm{ and  }\qquad \tilde b= b\left(\frac{\g-1}{4\g}\right)^{\frac{1}{\g-1}}\,,
\end{equation}
we obtain $ \frac{\pi}{1-b\rho}= \pi(1+\tilde b e^{-s/(\g c_v)} \pi^{\e-1})$, which is well defined for all  $\pi\geq 0$ and in particular for $p=0$, since we have assumed  in (\ref{eq:gpos})  $\g>1$.  The matrix associated to the system  (\ref{eq:symMUK}) writes now 
\[
A(\xi,\pi,u,s)=
\left(
\begin{array}{ccc}
u\cdot\xi & \frac{\g-1}{2}\frac{\pi}{1-b\rho} \, \xi^{\mathbf{T}}& 0\\
\frac{\g-1}{2}\frac{\pi}{1-b\rho} \exp(\frac{s}{\g c_v})\xi& u\cdot \xi I_d&0\\
0&0&u\cdot \xi
\end{array}
\right)\,.
\]
It is once again ``almost symmetric'' in the sense that $SA$ is symmetric, $S$ being the  diagonal definite positive matrix
\[
S= \mathrm{Diag}\left(1, \exp(-s/(\g c_v)),\ldots, \exp(-s/(\g c_v)), 1 \right)\,.
\]
Furthermore, this symmetriser is independent from $p$ and in particular is well defined and definite positive even when $p$ or $\rho$ vanishes. Finally, we have  the proposition:
\begin{proposition}\label{prop:sym2}
For a  Van der Waals gas with constant $c_v$ and $c_v>0$,  the system of Euler equations can be written for regular solutions  as  (\ref{eq:symMUK}) which is Friedrichs symmetrisable for $(p,u,s)\in \mathcal{K}$, where $\mathcal{K}$  is a compact subset of  $\rpic\times\reali^d\times\reali$.
\end{proposition}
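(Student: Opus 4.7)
My plan is to combine the computation sketched just above the statement with a direct verification that the proposed symmetriser is symmetric positive definite uniformly on the compact set $\mathcal{K}$. Two things have to be checked: first, that system (\ref{eq:euler}) is indeed equivalent, for regular solutions, to system (\ref{eq:symMUK}); and second, that a symmetriser whose definiteness survives the limit $\rho\to 0$ does exist.

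For the first point, I would start from the non-conservative form (\ref{eq:presq_sym}) in the variables $(p,u,s)$, which follows from (\ref{eq:euler}) by the standard manipulations on conservation laws combined with the transport of entropy, valid for smooth solutions with $\rho>0$. Then I apply the change of variable $\pi=f(p)=Cp^{\alpha}$ with $\alpha=(\g-1)/(2\g)$, multiplying the $p$-equation by $f'(p)$ and replacing $\nabla p$ by $f'(p)^{-1}\nabla\pi$ in the momentum equation. Using the Van der Waals identities $c^2=\g p/[\rho(1-b\rho)]$ and $p=(\g-1)\bigl(\rho/(1-b\rho)\bigr)^{\g}\exp(s/c_v)$, both non-trivial coefficients collapse to the common expression $\frac{\g-1}{2}\pi/(1-b\rho)$, up to the factor $\exp(s/(\g c_v))$ in the momentum line, leading exactly to (\ref{eq:symMUK}). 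The non-routine observation here is that this coefficient extends smoothly through the vacuum, since
\[
\frac{\pi}{1-b\rho}\;=\;\pi\bigl(1+\tilde b\,e^{-s/(\g c_v)}\pi^{\e-1}\bigr),
\]
which is well defined for every $\pi\geq 0$ because $\g>1$ by (\ref{eq:gpos}) yields $\e-1=2/(\g-1)>0$. Consequently (\ref{eq:symMUK}) makes sense on all of $\rpic\times\reali^d\times\reali$, not only where $\rho>0$.

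For the second point, I would exhibit the candidate symmetriser $S=\mathrm{Diag}(1,e^{-s/(\g c_v)},\ldots,e^{-s/(\g c_v)},1)$ and verify directly on the explicit form of $A(\xi,\pi,u,s)$ that $SA(\xi,\cdot)$ is symmetric: the $s$-row and $s$-column are decoupled and already symmetric, while the off-diagonal $\pi$--$u$ block $\frac{\g-1}{2}\pi/(1-b\rho)\,\xi^{\mathbf{T}}$ matches its transpose from the $u$--$\pi$ block precisely because the latter carries the extra factor $\exp(s/(\g c_v))$, which is cancelled by $e^{-s/(\g c_v)}$ from $S$. Positive definiteness of $S$ on $\mathcal{K}$ is immediate: since $s$ varies in a compact subset of $\reali$, the eigenvalue $e^{-s/(\g c_v)}$ is bounded between two positive constants. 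The main obstacle, and really the whole point of introducing the variable $\pi$, is that the symmetriser must not depend on $\pi$ or $\rho$; with the choice above it depends only on $s$, so $S$ remains definite positive at the vacuum where $\pi=\rho=0$, which is exactly what the classical $(p,u,s)$ symmetriser of Proposition \ref{prop:sym1} fails to achieve.
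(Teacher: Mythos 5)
Your proposal is correct and follows essentially the same route as the paper: the change of variable $\pi=Cp^{\alpha}$ applied to (\ref{eq:presq_sym}), the observation that $\frac{\pi}{1-b\rho}=\pi(1+\tilde b\,e^{-s/(\g c_v)}\pi^{\e-1})$ extends through vacuum since $\g>1$, and the symmetriser $S=\mathrm{Diag}(1,e^{-s/(\g c_v)},\ldots,e^{-s/(\g c_v)},1)$, whose definiteness on $\mathcal{K}$ depends only on $s$. Your explicit check that the off-diagonal blocks of $SA$ match is exactly the verification the paper leaves implicit.
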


Note that in the variables $(\pi, u, s)$,  the system  (\ref{eq:symMUK}) can be written
\begin{equation}\label{eq:symMUK2}
\left\{
\begin{array}{rcl}
\pt_t \pi+u\cdot \nabla \pi +\frac{\g-1}{2} (1+\tilde b \exp(\frac{-s}{\g c_v}) \pi ^{\e-1}) \pi \nabla\cdot u &=&0\,,\\
\pt_t u +(u\cdot \nabla)u +\frac{\g-1}{2}\exp(\frac{s}{\g c_v}) (1+\tilde b\exp({\frac{-s}{\g c_v}}) \pi ^{\e-1}) \pi\nabla \pi &=&0\,,\\
\pt_t s +u\cdot \nabla s&=&0\,,
\end{array}
\right.
\end{equation}
where $\nu $ and $\tilde b$  are defined as in (\ref{eq:exposant}).

\subsection{Local existence}

The symmetrisation of Proposition \ref{prop:sym2} is very useful to show local existence of regular solutions with vanishing density.

\begin{theorem}\label{thm:ex_loc_sym}
We consider a Van der Waals gas with constant $c_v$ such that  $\g\in ]1,3]$. Let
\[
 \pi_0=2\sqrt{\frac{\g}{\g-1}}\left(\frac{\rho_0}{1-b\rho_0}\right)^{(\g-1)/2} \exp(\frac{\g-1}{2\g c_v} s_0)\,,
\]
 where  $\rho_0\in \C1(\reali^3;[0,1/b[)$.  We introduce also $\nu=\frac{\g+1}{\g-1}$, $\tilde b=b\left(\frac{\g-1}{4\g}\right)^{\frac{1}{\g-1}}$. We assume that $(\pi_0, u_0,s_0)\in\H{m}(\reali^d)$ for $m>1+\frac{d}{2}$. Then there exists $T>0$ and a unique solution $(\pi,u,s)\in \C1([0,T]\times \reali^3)$  to the Cauchy problem for
 \begin{equation}\label{eq:symMUK3}
\left\{
\begin{array}{rcl}
\pt_t \pi+u\cdot \nabla \pi +\frac{\g-1}{2} (1+\tilde b \pi ^{\e-1}) \pi \nabla\cdot u &=&0\,,\\
\pt_t u +(u\cdot \nabla)u +\frac{\g-1}{2}\exp(\frac{s}{\g c_v}) (1+\tilde b \exp(\frac{-s}{\g c_v})\pi ^{\e-1}) \pi\nabla \pi &=&0\,,\\
\pt_t s +u\cdot \nabla s&=&0\,,
\end{array}
\right.
\end{equation} 
with initial condition $(\pi_0,u_0,s_0)$.
Furthermore, 
\[
(\pi, u,s)\in\C{}([0,T];\H{m}(\reali^d;\reali^{d+2}))\cap\C1([0,T];\H{m-1}(\reali^d;\reali^{d+2}))\,.
\]  
\end{theorem}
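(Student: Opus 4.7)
The plan is to treat (\ref{eq:symMUK3}) as a quasilinear Friedrichs-symmetrisable hyperbolic system and apply the classical Kato--Majda local existence theory, using the symmetriser
\[
S(s)=\mathrm{Diag}\bigl(1,e^{-s/(\g c_v)},\ldots,e^{-s/(\g c_v)},1\bigr)
\]
constructed in Proposition~\ref{prop:sym2}. Setting $U=(\pi,u^{\mathbf{T}},s)^{\mathbf{T}}$, the system reads $\pt_t U+\sum_j A_j(U)\pt_j U=0$. The crucial feature is that $S$ depends only on $s$, is smooth, and is uniformly positive definite on any compact range of $s$; the symmetrised inner product remains coercive even where $\pi$ vanishes, which is the very reason for introducing the Makino--Ukai--Kawashima variable $\pi$.

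I would build the solution by a Picard-type iteration: starting from $U^0\equiv(\pi_0,u_0,s_0)$, define $U^{n+1}$ as the solution of the \emph{linear} symmetric hyperbolic Cauchy problem
\[
\pt_t U^{n+1}+\sum_j A_j(U^n)\,\pt_j U^{n+1}=0,\qquad U^{n+1}\bigl|_{t=0}=(\pi_0,u_0,s_0),
\]
whose well-posedness in $\H{m}$ follows from the standard linear theory, the Lipschitz regularity of the coefficients being provided by $\H{m}\hookrightarrow\W{1}{\infty}$ (since $m>1+d/2$). Uniform $\H{m}$-bounds on $[0,T]$ with $T$ independent of $n$ would then be obtained by differentiating the equation $|\alpha|\leq m$ times, testing against $S(s^n)\pt^\alpha U^{n+1}$, integrating, and controlling the commutators with Moser-type product and composition inequalities, followed by Gronwall's lemma.

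Convergence of $(U^n)$ in $\C{0}([0,T];\L{2})$ is obtained from an analogous energy estimate applied to the difference equation, using the uniform $\H{m}$-bound to handle the Lipschitz differences of the coefficients. Combined with the $\H{m}$-bound this yields by weak-$\star$ compactness and interpolation a limit $U\in \L\infty([0,T];\H{m})\cap \W{1}{\infty}([0,T];\H{m-1})$ that solves (\ref{eq:symMUK3}); strong continuity in $\H{m}$ is then recovered by a Bona--Smith type regularisation argument (regularise the initial data, use stability of the approximate smooth solutions), and uniqueness follows from the $\L{2}$-energy estimate applied to the difference of two solutions.

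The main obstacle is the non-smooth dependence of the coefficients on $\pi$ through the factor $\pi^{\e-1}$ with $\e-1=2/(\g-1)$: near $\pi=0$ this function is only finitely differentiable, so the textbook composition estimate ``$F\in\C{\infty}$ with $F(0)=0$ $\Rightarrow$ $\norma{F(\pi)}_{\H{m}}\leq C(\norma{\pi}_{\L\infty})\norma{\pi}_{\H{m}}$'' cannot be used verbatim. This is exactly why the hypothesis $\g\leq 3$ is imposed: it ensures $\e-1\geq 1$, so that $\pi^{\e-1}$ is at least $\C{1}$ up to the vacuum; moreover, in every nonlinear term of (\ref{eq:symMUK3}) the factor $\pi^{\e-1}$ appears multiplied by an additional $\pi$, so the genuine nonlinearity is $\pi^{\e}$, of class $\C{2}$ with double zero at the origin, which together with the $\L{\infty}$-boundedness of $\pi$ guaranteed by Sobolev embedding suffices to run Moser's inequality and close the estimates. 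The exponentials $\exp(\pm s/(\g c_v))$ are smooth in $s$ and give no difficulty once $s$ is controlled in $\L{\infty}$.
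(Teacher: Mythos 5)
Your overall architecture --- Picard iteration on linearised symmetric hyperbolic problems, $m$-th order energy estimates weighted by the symmetriser $S(s)$ of Proposition~\ref{prop:sym2}, contraction in $\C{0}([0,T];\L2)$, a Bona--Smith argument for continuity in $\H{m}$, and an $\L2$ energy estimate for uniqueness --- is the standard Kato--Majda route, and it is exactly what the paper relies on: Theorem~\ref{thm:ex_loc_sym} is stated without proof and invoked later as ``a general theorem'' made applicable by the symmetrisation. So the skeleton of your argument is the intended one, and you correctly identify that the whole point of the variable $\pi$ is that $S$ depends only on $s$ and stays uniformly definite positive across the vacuum.

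There is, however, one step that does not hold as you state it: the treatment of the fractional power. You claim that since $\pi^{\e}$ is $\C{2}$ with a double zero at the origin, Moser's inequality closes the $\H{m}$ estimates. This is not sufficient for general $m$. The $m$-th order commutator estimates force you to bound $\norma{\D{}^{m}(\pi^{\e})}_{\L2}$, and by the Fa\`a di Bruno expansion this involves terms $\pi^{\e-j}\,\pt^{\beta_1}\pi\cdots\pt^{\beta_j}\pi$ with $j$ running up to $m$; the factor $\pi^{\e-j}$ is unbounded near the vacuum as soon as $j>\e$, so $\C{2}$ regularity of the nonlinearity cannot control derivatives of order $m>2$. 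The composition estimate that actually works is the paper's Lemma~\ref{lem:fp}, which requires either $\e\in\naturali$ with $\e\geq 2$, or $\e\geq m$ --- precisely the dichotomy that reappears as a hypothesis in Theorem~\ref{thm:VdW} and that your argument (and, arguably, the statement of Theorem~\ref{thm:ex_loc_sym} itself, where $\g\in\,]1,3]$ only guarantees $\e\geq 2$) silently skips. You should either import that hypothesis explicitly, or prove that $\pi\mapsto\pi^{\e}$ maps $\{\pi\in\H{m}\cap\L\infty:\pi\geq 0\}$ into $\H{m}$ for every real $\e\geq 2$; as written, the step fails for non-integer $\e\in\,]2,m[$.
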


\subsection{Positivity of the density}
For regular solutions,  the positivity of the density  is given by the following
\begin{proposition}\label{prop:rhopos}
Let $(\bar \rho,\bar u ,\bar s )\in \C1([0,T_{\mathrm{ex}}[\times \reali^d)$ be  a regular solution  of the Cauchy problem  (\ref{eq:euler}) associated to the regular initial conditions $(\rho_0,u_0,s_0)\in \C1(\reali^d) $. If $\nabla \bar u \in \L\infty([0,T]\times \reali^d)$ for all $T<T_{\mathrm{ex}}$, $\rho_0\in \L\infty(\reali^d,\reali)$ and if $\rho_0(x)\geq 0$ for all $x\in \reali^d$, then for all $t\in [0,T_{\mathrm{ex}}[$,
\[
\rho(t,x)\geq  0\,.
\]
\end{proposition}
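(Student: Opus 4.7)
The plan is to use the method of characteristics applied to the continuity equation. Expanding the first equation of \Ref{eq:euler} in non-conservative form yields
\[
\pt_t \bar\rho + \bar u\cdot \nabla \bar\rho = -\bar\rho\,\div\bar u\,,
\]
so $\bar\rho$ satisfies a linear transport equation along the flow of $\bar u$ with a bounded (in $\L\infty$) zero-order coefficient $-\div\bar u$.

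First I would define the characteristic flow $X(t,x)$ as the unique solution of the Cauchy problem
\[
\frac{\d{}}{\dt}X(t,x)=\bar u(t,X(t,x))\,, \qquad X(0,x)=x\,.
\]
Since $\bar u \in \C1([0,T_{\mathrm{ex}}[\times \reali^d)$ with $\nabla \bar u \in \L\infty([0,T]\times \reali^d)$ for every $T<T_{\mathrm{ex}}$, the Cauchy-Lipschitz theorem provides a unique global (in $x$) $\C1$ flow on $[0,T]$, and moreover $X(t,\cdot):\reali^d\to\reali^d$ is a $\C1$-diffeomorphism for each fixed $t$ (uniqueness backwards in time gives bijectivity; the Jacobian satisfies $\frac{\d{}}{\dt}\det \D{X} = (\div \bar u)\det\D{X}$, so it is bounded below away from $0$).

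Next, setting $\phi(t,x)=\bar\rho(t,X(t,x))$, the transport equation above gives the scalar ODE
\[
\frac{\d{}}{\dt}\phi(t,x) = -\phi(t,x)\,\div\bar u(t,X(t,x))\,,\qquad \phi(0,x)=\rho_0(x)\,,
\]
which integrates explicitly to
\[
\bar\rho(t,X(t,x)) = \rho_0(x)\exp\left(-\int_0^t \div\bar u(\tau,X(\tau,x))\,\d{\tau}\right)\,.
\]
The exponential factor is strictly positive (and finite, by the $\L\infty$ bound on $\nabla\bar u$ on $[0,T]\times\reali^d$), and $\rho_0(x)\geq 0$ by hypothesis, hence $\bar\rho(t,X(t,x))\geq 0$ for all $x\in \reali^d$. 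Since $X(t,\cdot)$ is surjective, given any $y\in \reali^d$ one has $y=X(t,x)$ for some $x$, and consequently $\bar\rho(t,y)\geq 0$ for all $t\in [0,T_{\mathrm{ex}}[$ and all $y\in\reali^d$.

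No step is really an obstacle: the only point that requires a moment of care is the surjectivity of the flow, which is why the $\L\infty$ bound on $\nabla\bar u$ (ensuring the ODE is globally Lipschitz in space on each $[0,T]$) is essential. In particular the argument does not use any information about the pressure law, only the transport structure of the mass conservation equation.
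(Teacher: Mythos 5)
Your proof is correct and follows essentially the same route as the paper: the method of characteristics applied to the mass conservation equation, yielding the explicit formula $\rho(t,x)=\rho_0(X(0;t,x))\exp\bigl(-\int_0^t \div \bar u\,\d{\tau}\bigr)$ whose exponential factor is positive. The only cosmetic difference is that you use the forward flow plus surjectivity where the paper traces characteristics backward from $(t,x)$, which amounts to the same thing.
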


\begin{proof}
We use the characteristics method  to obtain an expression of the solution of the Cauchy problem (\ref{eq:app}). 
Assuming $\nabla\bar u $ is bounded, we obtain:
\[
\rho(t,x)=\rho_0(X(0;t,x))\exp\left(-\int_0^t \div \bar u(\tau, X(\tau; t,x))\d{\tau} \right)> 0\,,
\]
where $X$  is a solution of the Cauchy problem
\[
\frac{\d{X}}{\d{t}}=\bar u(t,X)\,,\qquad\qquad X(t_0;t_0,x_0)=x_0\,,
\]
which is global in time since $\nabla\bar u $ is  bounded.
\end{proof}

We also prove that for a Van der Waals gas with constant $c_v>0$, the  variable $\pi=2\sqrt{\frac{\g}{\g-1}} \left(\frac{p}{\g-1}\right)^{\frac{\g-1}{2\g}}$ introduced in section \ref{sec:symMUK}  remains  non-negative if it is non-negative  at initial time. This property implies in particular that if   $\rho_0<1/b$,  then, as long as the regular solution exists, this property will be satisfied.
\begin{proposition} \label{prop:rholeqb}
We consider a Van der Waals gas with constant $c_v$. We denote $\pi=2\sqrt{\frac{\g}{\g-1}} \left(\frac{p}{\g-1}\right)^{\frac{\g-1}{2\g}}$, $\nu=\frac{\g+1}{\g-1}$ and $\tilde b=b\left(\frac{\g-1}{4\g}\right)^{\frac{1}{\g-1}}$. 
Let $(\bar \pi,\bar u ,\bar s )\in \C1([0,T_{\mathrm{ex}}[\times \reali^d)$ be  a regular solution of  
(\ref{eq:symMUK2}) 
satisfying the initial conditions
\begin{align*}
\pi(0,x)&=\pi_0(x)\,,&u(0,x)&=u_0(x)\,,&s(0,x)=s_0(x)\,,
\end{align*}
with $(\pi_0,u_0,s_0)\in \C1(\reali^d) $ and $s_0\in \L\infty$. If $\div \bar u \in \L\infty([0,T]\times \reali^d)$  for all  $T<T_{\mathrm{ex}}$,  $\pi_0\in \L\infty(\reali^d,\reali)$ and  $0\leq \rho_0(x)<1/b$ for all $x\in \reali^d$, then for all $t\in [0,T_{\mathrm{ex}}[$, $\pi\geq 0$.  Then, we can define $\rho$, and we have: 
\[
0\leq \rho(t,x)<1/b\,.
\]
\end{proposition}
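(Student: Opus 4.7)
The plan is to adapt the characteristic-based argument of Proposition~\ref{prop:rhopos} to $\bar\pi$, using the first equation of~(\ref{eq:symMUK2}), and then to recover $\bar\rho\in[0,1/b)$ by inverting the change of variables $(\rho,s)\mapsto\pi$.

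Since $\nabla\bar u\in\L\infty([0,T]\times\reali^d)$ for every $T<T_{\mathrm{ex}}$, the flow $X(\cdot;t,x)$ of $\bar u$ is globally defined on $[0,T_{\mathrm{ex}}[$, and the third equation of~(\ref{eq:symMUK2}) (pure transport) yields $\bar s(\tau,X(\tau;t,x))=s_0(X(0;t,x))$, uniformly bounded by $\norma{s_0}_{\L\infty}$. Fixing $(t,x)$ and setting $\phi(\tau)=\bar\pi(\tau,X(\tau;t,x))$, the first equation of~(\ref{eq:symMUK2}) reduces along the characteristic to the scalar ODE
\[
\phi'(\tau)=-\frac{\g-1}{2}\Bigl[1+\tilde b\exp\!\Bigl(-\tfrac{s_0(X(0;t,x))}{\g c_v}\Bigr)\phi(\tau)^{\e-1}\Bigr](\div\bar u)(\tau,X(\tau;t,x))\,\phi(\tau),
\]
which I abbreviate $\phi'=F(\tau,\phi)$.

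I then analyse the sign of $\phi$ according to $\phi(0)=\pi_0(X(0;t,x))\geq 0$. If $\phi(0)>0$, $\phi$ stays positive on some maximal interval $[0,\tau^*)$; there the equation is linear in $\phi$ with a coefficient bounded in terms of $\norma{\div\bar u}_{\L\infty}$, $\norma{s_0}_{\L\infty}$ and $\sup_{[0,\tau^*]}|\phi|$ (the last finite by continuity of $\phi$ on the compact set $[0,\tau^*]$), so the explicit exponential formula produces a strictly positive limit at $\tau^*$ and contradicts the definition of $\tau^*$ unless $\tau^*=T_{\mathrm{ex}}$. If $\phi(0)=0$, I observe that $\phi\equiv 0$ is a solution of the Cauchy problem, and that $F(\tau,y)=-\tfrac{\g-1}{2}(y+\tilde b e^{-s_0(\cdot)/(\g c_v)}y^{\e})(\div\bar u)$ is Lipschitz in $y$ on any bounded subset of $[0,+\infty)$, since $y\mapsto y^{\e}$ is $\C1$ on $[0,M]$ (this uses $\e>1$, guaranteed by~(\ref{eq:gpos}) and~(\ref{eq:exposant})); Cauchy--Lipschitz uniqueness then forces $\phi\equiv 0$. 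In either case $\bar\pi\geq 0$ on $[0,T_{\mathrm{ex}}[\times\reali^d$.

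Once $\bar\pi\geq 0$ is in hand, the map $\rho\mapsto\rho/(1-b\rho)$ is an increasing bijection from $[0,1/b)$ onto $[0,+\infty)$, so each pair $(\pi,s)$ with $\pi\geq 0$ determines a unique $\rho\in[0,1/b)$ through $\pi=2\sqrt{\g/(\g-1)}(\rho/(1-b\rho))^{(\g-1)/2}\exp((\g-1)s/(2\g c_v))$; applying this pointwise defines $\bar\rho(t,x)\in[0,1/b)$. The main obstacle is the vacuum case $\pi_0(x_0)=0$---precisely the scenario that the symmetrisation~(\ref{eq:symMUK2}) was designed to accommodate---which is handled by the Cauchy--Lipschitz uniqueness step, itself crucially relying on the condition $\e>1$ encoding the physical hypothesis $c_v>0$ from~(\ref{eq:gpos}).
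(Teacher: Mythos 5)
Your argument is essentially sound but takes a genuinely different route from the paper, and it has two weak points worth naming. The paper does not run characteristics on $\bar\pi$ at all: it rewrites the first equation of (\ref{eq:symMUK2}) as a scalar balance law $\pt_t w+\div(w\bar u)=g(t,x,w)$ with $g(t,x,w)=\bigl(1-\frac{\g-1}{2}(1+\tilde b e^{-\bar s/(\g c_v)}w^{\e-1})\bigr)w\div\bar u$, observes that the regular solution $\bar\pi$ coincides with the Kru\v zkov entropy solution with datum $\pi_0$ while $w\equiv 0$ is the entropy solution with datum $0$, and concludes $\bar\pi\geq 0$ from the Kru\v zkov comparison principle; the bound $0\leq\rho<1/b$ then follows from the explicit inversion $\rho=\frac{1}{b}\bigl(1-(1+\tilde b e^{-s/(\g c_v)}\pi^{2/(\g-1)})^{-1}\bigr)$, which is equivalent to your monotone-bijection argument. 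The comparison principle uses only $\div\bar u\in\L\infty$, i.e.\ exactly the stated hypothesis.

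First weak point: you open with ``since $\nabla\bar u\in\L\infty$\dots'', but the proposition assumes only $\div\bar u\in\L\infty$ (contrast with Proposition \ref{prop:rhopos}, which does assume $\nabla\bar u\in\L\infty$ and is proved by characteristics). With $\bar u$ merely $\C1$ and no bound on its gradient or growth, backward characteristics need not reach $\tau=0$, so the flow $X$ is not obviously globally defined; this is presumably why the paper switches to the entropy-comparison argument here, and your proof as written establishes the conclusion only under the stronger hypothesis. Second weak point: in the vacuum case $\phi(0)=0$ you invoke Cauchy--Lipschitz with $F$ Lipschitz only on bounded subsets of $[0,+\infty)$, whereas uniqueness of the trivial solution requires a Lipschitz bound on a two-sided neighbourhood of $y=0$ (and a meaning for $\phi^{\e-1}$ when $\phi<0$, which matters for non-integer $\e$). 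Both the case split and this gap disappear if you instead regard $a(\tau)=-\frac{\g-1}{2}\bigl(1+\tilde b e^{-s_0/(\g c_v)}\modulo{\phi(\tau)}^{\e-1}\bigr)\div\bar u(\tau,X(\tau;t,x))$ as a known continuous function of $\tau$ (continuity at zeros of $\phi$ uses $\e>1$) and integrate the linear equation $\phi'=a\phi$ to get $\phi(\tau)=\phi(0)\exp\int_0^\tau a$, which is precisely the trick the paper uses for $\rho$ in the proof of Proposition \ref{prop:rhopos}.
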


\begin{proof} Let $T_0<T_{\mathrm{ex}}$.
We introduce the Cauchy problem
\begin{align}
\pt_t w+\div (w\bar u)&=g(t,x,w)\,,\label{scalaire}\\ 
 w(0,x)&=w_0(x)\,,\nonumber
\end{align}
where
\[
g(t,x,w)=\left( 1-\frac{\g-1}{2}(1+\tilde b \exp(\frac{-\bar s(t,x)}{\g c_v}) {w}^{\e-1}) \right)  w \div\bar u(t,x)
\]
We can apply the  Kru\v zkov theorem \cite{Kruzkov, Sr1}. Indeed, the hypotheses ensure that $g(t,x,w)-w\div (\bar u(t,x))=-\frac{\g-1}{2}(1+\tilde b e^{-\bar s/(\g c_v)}{w}^{\e-1})w \div\bar u$ is uniformly bounded with respect to  $x\in \reali^d$ when $w$ is considered as a variable taking values in a compact set. Furthermore, $\pi_0\in \L\infty$,  so the regular solution  $\bar \pi$ coincides with the entropy solution $w_1$ of (\ref{scalaire}) associated to $w_{0,1}=\pi_0$.

Besides, the entropy solution $w_2$ of  (\ref{scalaire}) associated  to the initial condition  $w_{0,2}\equiv 0$ is the  function constantly equal to 0. 

After Kru\v zkov Theorem  $w_{0,1}\geq w_{0,2}$ implies $w_1\geq w_2$ for all $(t,x)\in [0,T_0]\times \reali^d$, that is to say $\bar \pi(t,x) \geq 0$ for all $(t,x)\in [0,T_0]\times \reali^d$. 
The formula:
\[
\rho=\frac{1}{b}\left(1-\frac{1}{1+\tilde b \exp(\frac{-s}{\g c_v}) \pi^{\frac{2}{\g-1}}}\right)
\]
allows us to conclude.
\end{proof}

\section{Proof of Theorem \ref{thm:VdW}}\label{sec:proof}

In order to prove this theorem, we adapt M. Grassin's idea \cite{Grass}. First, we  look to the isentropic case, which allows us to simplify the estimates. For a Van der Waals gas,   non-linear terms now  appear in  the estimate which we need to treat separately.

\subsection{Isentropic case}\label{sec:isen}
Let us consider first the isentropic case
 \begin{equation}\label{eq:symMUK2ise}
\left\{
\begin{array}{rcl}
\pt_t \pi+u\cdot \nabla \pi +\frac{\g-1}{2} (1+\tilde b \pi ^{\e-1}) \pi \nabla\cdot u &=&0\,,\\
\pt_t u +(u\cdot \nabla)u +\frac{\g-1}{2}(1+\tilde b \pi ^{\e-1}) \pi\nabla \pi &=&0\,,\\
\end{array}
\right.
\end{equation}
with initial conditions 
\begin{align}
\pi(0,x)&=\pi_0(x)\,,& u(0,x)&=u_0(x)\,.\label{eq:inisen}
\end{align}
which is technically simpler than the general case, but provides estimates very useful in order to treat the general case.

We consider besides the problem 
 \begin{equation}\label{eq:app2}
 \left\{\begin{array}{cccl}
 \pt_t \bar{u} +(\bar{u}\cdot \nabla)\bar{u}&=&0 & \textrm{on } \reali_+\times \reali^d\,,\\
         \bar{u}(0,x)&=&u_0(x)& \textrm{on } \reali^d\,,
     \end{array}\right.
 \end{equation}
 obtained by neglecting $\pi$ in (\ref{eq:symMUK2ise}). After  \cite[Lemme 3.1 and Prop. 3.1]{Grass}  we have the following preliminary result
\begin{proposition}\label{prop:ubar}
Under hypotheses \textbf{(H2)} and \textbf{(H3)},  the problem (\ref{eq:app2}) admits a global regular solution  $\overline u$ satisfying
\begin{enumerate}
\item $\D{\overline u} (t,x)=\frac{\Id}{1+t}+\frac{K(t,x)}{(1+t)^2}$, for all $x\in \reali^d$ and $t\in \rpic$,
\item $\norma{\D{}^l \overline u (t,\cdot)}_{\L2}\leq K_l(1+t)^{d/2-(l+1)}$ for $l\in \lb 2,M+1 \rb$,
\item $\norma{\D{}^2\overline u(t,\cdot)}_{\L\infty}\leq C(1+t)^{-3}$,
\end{enumerate}
where $K:\rpic\times\reali^d\to \mathcal{M}_d(\reali)$, $\norma{K}_{\L\infty(\rpic\times \reali^d)}\leq K_1$,  $C$ and $K_l$ for $l\in \lb 1,m+1\rb$  being non-negative constants not depending on $m,d,\delta, \norma{u_0}_X$.
\end{proposition}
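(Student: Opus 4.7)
My proof strategy is the method of characteristics: since (\ref{eq:app2}) means $\bar u$ is transported at its own value along straight lines, I set $Y(t,x) := x + t\,u_0(x)$ and look for the classical solution via the implicit prescription $\bar u(t, Y(t,x)) = u_0(x)$. The key step is to prove that $Y(t,\cdot):\reali^d\to\reali^d$ is a global $\C{m}$-diffeomorphism for every $t\geq 0$. Now $\D{Y}(t,x) = \Id + t\,\D{u_0}(x)$ is singular iff $-1/t$ is an eigenvalue of $\D{u_0}(x)$, and \textbf{(H3)} rules this out with quantitative gap $\delta$. Combined with the uniform boundedness of $\D{u_0}$ from \textbf{(H2)}, a resolvent estimate (for instance via a Dunford contour enclosing $\mathrm{Spec}(\D{u_0}(x))$ at distance $\delta/2$) yields $\norma{(\Id + t\,\D{u_0}(x))^{-1}} \leq C/(1+\delta t)$ uniformly in $x$. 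A Hadamard-type argument (proper local diffeomorphism $\Rightarrow$ global) then turns $Y(t,\cdot)$ into a global diffeomorphism, and $\bar u := u_0\circ Y(t,\cdot)^{-1}$ is the announced global classical solution.

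The structure of $\D{\bar u}$ follows by differentiating (\ref{eq:app2}) once: $(\pt_t + \bar u\cdot\nabla)\D{\bar u} = -(\D{\bar u})^2$. Along the Lagrangian trajectory $t\mapsto Y(t,x)$ this matrix Riccati ODE integrates explicitly as $\D{\bar u}(t, Y(t,x))^{-1} = \D{u_0}(x)^{-1} + t\,\Id$ (note that \textbf{(H3)} also forces $\D{u_0}(x)$ to be invertible), giving
\[
\D{\bar u}(t, Y(t,x)) = \D{u_0}(x)\bigl(\Id + t\,\D{u_0}(x)\bigr)^{-1}.
\]
The elementary algebraic identity $(1+t)\D{u_0}(\Id + t\,\D{u_0})^{-1} = \Id + (\D{u_0} - \Id)(\Id + t\,\D{u_0})^{-1}$ then yields item~1 with
\[
K(t, Y(t,x)) := (1+t)\bigl(\D{u_0}(x) - \Id\bigr)\bigl(\Id + t\,\D{u_0}(x)\bigr)^{-1},
\]
which is uniformly bounded thanks to the estimate of the previous paragraph.

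For $l \geq 2$ I would argue by induction on $l$. Differentiating (\ref{eq:app2}) $l$ times, the tensor $B_l := \D{}^l\bar u$ satisfies, along characteristics, a linear ODE of the form
\[
\frac{\d{B_l}}{\d{t}} = -\mathcal{L}_l(\D{\bar u})\,B_l + \mathcal{R}_l(B_1,\dots,B_{l-1}),
\]
where $\mathcal{L}_l$ is the sum of the $l+1$ natural contractions of $\D{\bar u}$ against $B_l$ and $\mathcal{R}_l$ is a polynomial nonlinearity. Substituting the decomposition $\D{\bar u} = \Id/(1+t) + K/(1+t)^2$, the leading part of $\mathcal{L}_l(\D{\bar u})$ acts by multiplication by $(l+1)/(1+t)$, and the integrating factor $(1+t)^{l+1}$ together with the induction hypothesis on $\mathcal{R}_l$ produces the pointwise bound
\[
|\D{}^l\bar u(t, Y(t,x))| \leq \frac{C_l}{(1+t)^{l+1}}\bigl(|\D{}^l u_0(x)| + \text{lower order in $l$}\bigr).
\]
Changing variables $y = Y(t,x)$ in the $\L2$ norm, using $|\det \D{Y}(t,x)| \leq C(1+t)^d$ and the fact that $\D{}^j u_0 \in \L2$ for $2\leq j\leq m+1$ (a consequence of \textbf{(H2)}), I obtain item~2: $\norma{\D{}^l\bar u(t,\cdot)}_{\L2}\leq K_l(1+t)^{d/2-(l+1)}$. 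For item~3 the same pointwise inequality with $l=2$, combined with the Sobolev embedding $\H{m-1}\hookrightarrow \L\infty$ (valid since $m - 1 > d/2$) applied to $\D{}^2 u_0$, gives $\norma{\D{}^2\bar u(t,\cdot)}_{\L\infty}\leq C(1+t)^{-3}$.

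The main obstacle is converting the spectral condition \textbf{(H3)} into a uniform operator-norm bound on $(\Id + t\,\D{u_0})^{-1}$: since $\D{u_0}(x)$ is not assumed normal, the eigenvalue separation $|1 + t\lambda|\geq \delta t$ does not immediately control the nilpotent part, and one must either represent the inverse by a contour integral whose length is bounded via \textbf{(H2)}, or split $\D{u_0}$ into its semisimple and nilpotent parts and handle each separately. Once this uniform bound is available, the rest of the argument is a bookkeeping exercise with Faà di Bruno-type expansions of $\D{}^l$ along the characteristic flow.
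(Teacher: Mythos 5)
Your argument is correct and is essentially the one the paper relies on: the paper does not prove this proposition itself but imports it from \cite[Lemme 3.1 and Prop.\ 3.1]{Grass}, whose proof is exactly your construction --- the straight characteristics $Y(t,x)=x+t\,u_0(x)$, the uniform bound $\norma{(\Id+t\,\D{u_0}(x))^{-1}}\leq C/(1+t)$ deduced from \textbf{(H2)}--\textbf{(H3)}, the explicit Riccati formula for $\D{\overline u}$, and the induction on $l$ along the flow. The only point worth underlining is that the rate $(1+t)^{-(l+1)}$ (rather than $(1+t)^{-l}$) is invisible term by term in a Fa\`a di Bruno expansion of $u_0\circ Y(t,\cdot)^{-1}$ --- it results from a cancellation --- so your integrating-factor argument, which keeps all $l+1$ contractions in $\mathcal{L}_l(\D{\overline u})$ before passing to estimates, is the correct way to obtain it (with the usual Gagliardo--Nirenberg bookkeeping for the quadratic remainder $\mathcal{R}_l$ at top order).
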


\subsubsection{Local uniqueness}\label{sec:uniloc_ise}
\begin{proposition}\label{prop:uni_loc}
Let  $U_0=(\pi_0,u_0)^{\mathbf{T}}\in \H{m}(\reali^d)$ and $\tilde U_0$ be two initial data for (\ref{eq:symMUK2ise}). Let  $U=(\pi,u)^{\mathbf{T}}$, $\tilde U$ be two corresponding solutions, defined for $0\leq  t \leq T_0$. We assume that $\norma{\D{\tilde U}}_{\L\infty([0,T_0]\times \reali^d)}<\infty$. Let $x_0\in \reali^d$ and $R \geq 0$. We denote 
\begin{align}
M&=\sup \{ (\frac{\g-1}{2} \modulo{\pi}(1+ \tilde b \pi^{\e-1})+\modulo{u})(t,x)\,, (t,x)\in [0,T_0]\times B(x_0,R) \}\,,\label{eq:M}\\
C_T&=\{
(t,x)\in [0,T]\times B(x_0,R-Mt)
\} \qquad\textrm{ for }T\in[0,T_1]\,,\label{eq:ct}
\end{align}
where $T_1=\min(T_0,\eta/M)$.
If $U_0=\tilde U_0$ on $B(x_0,R)$ then $U=\tilde U$ on $C_{T_1}$.
\end{proposition}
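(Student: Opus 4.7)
The plan is to run the classical Friedrichs energy estimate on the backward cone $C_{T_1}$. In the isentropic case the coefficient of $\nabla\cdot u$ in the $\pi$--equation coincides with the coefficient of $\nabla\pi$ in the $u$--equation, so (\ref{eq:symMUK2ise}) is literally symmetric: writing it as $\pt_t U+\sum_k A_k(U)\pt_k U=0$ with $U=(\pi,u)^{\mathbf T}$, each $A_k(U)$ is a symmetric matrix. No additional symmetriser is needed here, which is the main simplification compared with the full system (\ref{eq:symMUK}).

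Set $V:=U-\tilde U$. Subtracting the two equations yields the linear symmetric hyperbolic equation
\[
\pt_t V+\sum_k A_k(U)\,\pt_k V=-\sum_k\bigl(A_k(U)-A_k(\tilde U)\bigr)\pt_k\tilde U,
\]
whose right--hand side is pointwise controlled by $C\modulo{V}$ since the entries of $A_k$ are smooth functions of $(\pi,u)$ on the compact set on which $U$ and $\tilde U$ take their values, and since $\norma{\D\tilde U}_{\L\infty}$ is finite by hypothesis (that $\D U\in\L\infty$ follows from $U\in\H{m}$ with $m>1+d/2$ via Sobolev). Multiplying the equation by $V$, integrating over the shrinking ball $B(x_0,R-Mt)$, using the symmetry of $A_k(U)$ to integrate by parts in space, and accounting for the contraction of the domain, one obtains
\[
\frac{d}{dt}\int_{B(x_0,R-Mt)}\modulo{V}^2\dx\leq -\int_{\partial B}\Bigl(M\modulo{V}^2+\sum_k\nu_k\,V^{\mathbf T}A_k(U)V\Bigr)\d S+C\int_B\modulo{V}^2\dx,
\]
where the bulk constant $C$ absorbs the divergence of $A_k(U)$ and the source term above.

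The crucial observation is that $M$, as defined in (\ref{eq:M}), bounds the spectral radius of $\sum_k\nu_k A_k(U)$ on $C_{T_1}$. Indeed this matrix is symmetric with eigenvalues $u\cdot\nu$ (of multiplicity $d-1$) and $u\cdot\nu\pm\frac{\g-1}{2}\pi(1+\tilde b\pi^{\e-1})$; by Proposition \ref{prop:rholeqb} one may take $\pi\geq 0$ on the cone, so these are all real and bounded in absolute value by $M$. Hence the lateral--boundary integrand is non--positive, and what remains is the Gronwall inequality $\frac{d}{dt}\int_B\modulo{V}^2\dx\leq C\int_B\modulo{V}^2\dx$. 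Since $V(0,\cdot)\equiv 0$ on $B(x_0,R)$, we conclude $V\equiv 0$ on $C_{T_1}$.

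The only delicate point, and the place where the Van der Waals character enters, is checking that the non--linear characteristic speed $\frac{\g-1}{2}\modulo{\pi}(1+\tilde b\pi^{\e-1})$ is genuinely dominated by $M$ on the cone and that the map $\pi\mapsto\pi(1+\tilde b\pi^{\e-1})$ is Lipschitz on the bounded range of $\pi$ (needed to justify $\modulo{A_k(U)-A_k(\tilde U)}\leq C\modulo{V}$ in the first step). Both follow from the boundedness and non--negativity of $\pi$; everything else is the textbook energy argument for symmetric hyperbolic systems.
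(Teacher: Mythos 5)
Your proof is correct and follows essentially the same route as the paper's: the finite-propagation-speed energy estimate for the symmetric system on the backward cone of slope $M$, with the sign of the lateral boundary term controlled by $M$ dominating the characteristic speeds, and Gronwall to conclude from $V(0,\cdot)=0$. The only difference is presentational (you differentiate the energy on the shrinking ball in time, the paper integrates over the space-time cone and applies Stokes, and you make the eigenvalue computation behind the choice of $M$ explicit where the paper merely asserts it).
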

The proof of this proposition is classical for hyperbolic systems (see for example \cite{Racke}), the constant $M$ being the maximal propagation speed.

\begin{proof}
Let $U_0=(\pi_0,u_0), \tilde U_0=(\tilde\pi_0,\tilde u_0)$ be two initial data  for (\ref{eq:symMUK2ise})  such that $U_0\in \H{m}$. Let  $U,\tilde U$  be the solutions of the associated Cauchy problems. We assume that these solutions are defined on  $[0,T_0]$ with $T_0>0$. 
Let also  $x_0\in \reali^d$, $\eta\in \rpis$ and $M$, $C_T$  be as in (\ref{eq:M}) and (\ref{eq:ct}). Then we have: 
\[
\pt_t U +\sum_j a_j(U)\pt_j U=0\,,
\]
where
\[
 a_j(U)=\left(
\begin{array}{cccc}
u_j&0& \frac{\g-1}{2}(\pi+\tilde b \pi^\e)&\\
0&\ddots&&\\
\frac{\g-1}{2}(\pi +\tilde b\pi^\e)&&&0\\
\vdots&&\ddots&0\\
0&0&&u_j
\end{array}
\right)\,.
\]
Consequently, $\pt_t (U- \tilde U) +\sum_j a_j(U)\pt_j(U-\tilde U)+ (a_j(U)-a_j(\tilde U))\pt_j \tilde U=0$. Then, we make the scalar product with $(U-\tilde U)$ and we integrate on  $C_T$ for $T\in [0,T_0]$. We get
\begin{align*}
&\frac{1}{2}\int_{C_T} \pt_t \modulo{U-\tilde U}^2 +\sum_j \pt_j\left((U-\tilde U)\cdot a_j(U)(U-\tilde U)\right)\\ 
&\qquad  -\sum_j (U-\tilde U)\cdot \pt_j(a_j(U))(U-\tilde U) \d{x}\d{t}
=-\int_{C_T} (U-\tilde U)\cdot (a_j(U)-a_j(\tilde U))\pt_j \tilde U\d{x}\d{t}\,.
\end{align*}
Then using the Stokes formula and noting that $\pt C_T=(\{0\}\times B(x_0,\eta))\cup (\{T\}\times B(x_0,\eta-MT))\cup \mathcal{C} $, we obtain
\begin{align*}
&\frac{1}{2}\int_{B(x_0,R-MT)}\modulo{U-\tilde U}^2(T,x)\d{x} -\frac{1}{2}\int_{B(x_0,\eta)}\modulo{U-\tilde U}^2(0,x)\d{x}\\
&+\frac{1}{2\sqrt{1+1/M^2}}\int_{\mathcal{C}} \modulo{U-\tilde U}^2 + \sum_j (U-\tilde U)\cdot a_j(U)(U-\tilde U)\frac{x_j}{M\modulo{x}}\d{\sigma}\\
=&\int_{C_T}   \sum_j (U-\tilde U)\cdot \pt_j(a_j(U))(U-\tilde U) -(U-\tilde U)\cdot (a_j(U)-a_j(\tilde U))\pt_j \tilde U\d{x}\d{t}\,.
\end{align*}
Besides $\norma{\pt_j a_j(u)}_{\L\infty} \leq C\norma{\D{U}}_{\L\infty(C_T)}(1+\norma{\pi}_{\L\infty(C_T)}^{\e-1})$. Hence,
\begin{align*}
&\int_{C_T}  (U-\tilde U)\cdot \pt_j(a_j(U))(U-\tilde U)-(U-\tilde U)\cdot (a_j(U)-a_j(\tilde U))\pt_j \tilde U\d{x}\d{t}\\
\leq& C\norma{\D{U}}_{\L\infty(C_T)}(1+\norma{\pi}_{\L\infty(C_T)}^{\e-1})\int_0^T \int_{B(x_0,R-Mt)}\modulo{U-\tilde U}^2\d{x}\d{t}\,.
\end{align*}
Furthermore, the choice of  $M$ implies 
\[
 \int_{\mathcal{C}} \modulo{U-\tilde U}^2 + \sum_j (U-\tilde U)\cdot a_j(U)U-\tilde U)\frac{x_j}{M\modulo{x}}\d{\sigma}\geq 0\,,
\]
so finally we get the estimate
\begin{align*}
&\frac{1}{2}\int_{B(x_0,R-MT)}\modulo{U-\tilde U}^2(T,x)\d{x} -\frac{1}{2}\int_{B(x_0,\eta)}\modulo{U-\tilde U}^2(0,x)\d{x}\\
\leq &  C\norma{\D{U}}_{\L\infty(C_T)}(1+\norma{\pi}_{\L\infty(C_T)}^{\e-1})\int_0^T \int_{B(x_0,R-Mt)}\modulo{U-\tilde U}^2\d{x}\d{t}\,.
\end{align*}
We conclude thanks to Gronwall lemma that
\[
\frac{1}{2}\int_{B(x_0,R-MT)}\modulo{U-\tilde U}^2(T,x)\d{x}\leq\frac{1}{2}e^{C'T} \int_{B(x_0,R)}\modulo{U_0-\tilde U_0}^2(x)\d{x}
\]
where $C'=C\norma{\D{U}}_{\L\infty(C_T)}(1+\norma{\pi}_{\L\infty(C_T)}^{\e-1})$.
\end{proof}

\subsubsection{Local Existence}
We construct a local solution of  (\ref{eq:symMUK2ise})--(\ref{eq:inisen}) such that the difference between this solution  and  $(0,\overline u)$ be in $\C0(\rpic;\H{m}(\reali^d;\reali^{d+1}))\cap \C1(\rpic;\H{m-1}(\reali^d;\reali^{d+1}))$. 
The first step is the symmetrisation of the system, given by Proposition \ref{prop:sym2}.  This result allows us to use a general theorem (see Theorem \ref{thm:ex_loc_sym}) giving the local existence of solution. Let us define as above, 
\begin{equation}\label{eq:piisen}
\pi =2\sqrt{\frac{\g }{\g-1}}\left(\frac{p}{\g-1}\right)^{\frac{\g -1}{2\g}}=2\sqrt{\frac{\g }{\g-1}}\left( \frac{\rho}{1-b\rho} \right)^{\frac{\g-1}{2}}\,,
\end{equation}
where we assume $0\leq \rho< 1/b$. Then we use the same  method as M. Grassin \cite{Grass} to prove that the system (\ref{eq:symMUK2ise}) admits a local in time solution, with initial condition $u_0$ in the space $X$ and not in a Sobolev space (in particular, $u_0$ does not tend to 0 at infinity). We use here the compactness of the support of $\rho_0$ (hypothesis \textbf{(H4)})  and the finite propagation speed of the solutions for an hyperbolic system. More precisely, we assume that $\mathrm{Supp}(\rho_0)\subset B(0,R)$ for $R>0$. 
Let  $\eta>0$ and $\phi\in \Cc\infty(\reali^d;\rpic)$  be such that $\phi\equiv 1$ on $B(0, R+2\eta)$. 
We obtain a local in time solution  $(\pi^\phi,u^\phi)$ of the problem (\ref{eq:symMUK2ise}) with initial conditions $(\pi_0,\phi u_0)\in\H{m}$ for $t\in [0,T[$.   The Propositions \ref{prop:rhopos} and \ref{prop:rholeqb} ensure that the condition $0\leq \rho<1/b$
is satisfied.

Let $\varepsilon \in ]0, T[$ ,  we introduce  the \emph{maximal propagation speed}  $M=\sup \{\frac{\g-1}{2}(\modulo{\pi^\phi }+\modulo{\pi^\phi}^\e) +\modulo{u^\phi}\,;\; t\in [0,T-\varepsilon ]\,,\; x\in \reali^d \} $. We also introduce  $\varepsilon' \in ]0,\frac{\eta}{2M}[$ and  $T_1=\min(T-\varepsilon, \frac{\eta}{2M}-\varepsilon')$ the time for which this construction is available. We finally obtain a solution $(\pi,u)$ of (\ref{eq:symMUK2ise})--(\ref{eq:inisen}) by denoting 
\[
(\pi,u)=\left\{
\begin{array}{l}
(\pi^\phi,u^\phi) \textrm{ in } \mathcal{K}\,,\\
(0,\overline u) \textrm{ out of  } \mathcal{K}\,,
\end{array}
\right.
\]
where $\mathcal{K}$ is the cone $\mathcal{K}=\{(t,x)\, ; \; 0\leq t\leq T_1\,,\; x\in B(0,R+\eta+Mt) \}$.  Then it is sufficient to show that the solutions can be glued smoothly along $\pt \mathcal{K}$. We use here the property of local uniqueness given below by Proposition \ref{prop:uni_loc}. Let indeed $x_0\in S(0,R+\eta)$ be the sphere of radius $R+\eta$ of centre 0 and  $E_{x_0}=\{(t,x)\,;\; t\in [0,T_1], x\in B(x_0,\eta-Mt)\}$. The choice of  $T_1$ implies in particular $\pt \mathcal{K}\subset \cup_{x_0\in S(0,R+\eta)} E_{x_0} $ (see Fig. \ref{fig:cone}).
\begin{figure}[ht] 
\begin{center} 
\includegraphics[width=7.5cm]{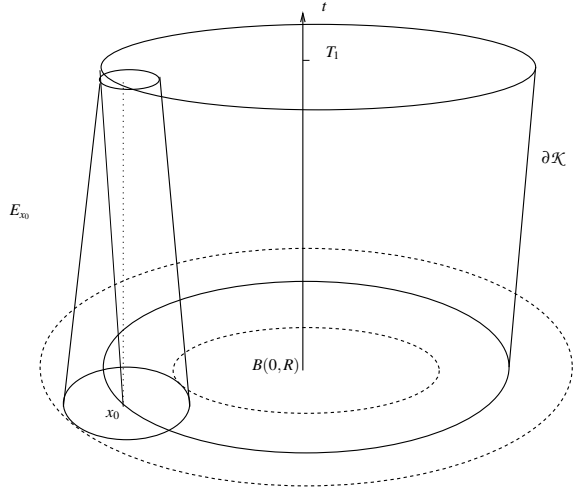} 
\caption{Gluing of the solutions along $\partial\mathcal{K}$.}\label{fig:cone}
\end{center}
\end{figure}
The initial conditions of  $(\pi^\phi,u^\phi)$ and $(0,\overline u)$ coincide on $E_{x_0}$  since the support of  $\pi_0$  is included in $B(0,R)$.  Proposition \ref{prop:uni_loc}  allows us then to claim that  $(\pi^\phi,u^\phi)=(0,\overline u)$ on $E_{x_0}$ and consequently on a neighbourhood of  $\pt\mathcal{K}$.

\subsubsection{Energy Estimates}
When we compare to the Perfect Polytropic Gas case \cite{Grass},  we observe that the system (\ref{eq:symMUK2ise}) has an additional term, which will modify the estimates. 

Let us  denote $U=(\pi, u-\overline u)$, $w=u-\overline u$ and $\overline U=(0,\overline u)$. We also introduce
\begin{align}
A_j(U)&=\left(
\begin{array}{ccccc}
u_j-\bar u_j&0&\ldots& \frac{\g-1}{2} \pi&0\\
0&u_j-\bar u_j&0&&0\\
\frac{\g-1}{2} \pi &0&\ddots&&\vdots\\
&\vdots&&&0\\
0&0&\ldots&0&u_j-\bar u_j
\end{array}
\right)\,,&
B(\D{\overline U},U)&=\left(
\begin{array}{c}
\frac{\g-1}{2} \pi \div \overline u\\
(w\cdot \nabla)\overline u
\end{array}
\right)\,,\label{eq:AB}
\end{align}
so that the system  \Ref{eq:symMUK2ise} can be written 
\begin{equation}\label{eq:matrice}
\pt_t U+\sum_{j=1}^d A_j(U)\pt_j U=-B(\D{ \overline U}, U)-\sum_{j=1}^d \overline u_j \pt_j U- F(\D{ \overline U},\D{U}, U) 
\end{equation}
where
\begin{equation}\label{eq:F}
F(\D{ \overline U},\D{U}, U)=\frac{\g-1}{2} \tilde b\pi^{\e} 
\left( 
\begin{array}{c}
\div (w+\overline u)\\
\nabla \pi
\end{array}
\right)\,,\qquad\textrm{ with } \quad \e=\frac{\g+1}{\g-1}\,.
\end{equation}

Observing the properties of  $\overline u$ described in  Proposition \ref{prop:ubar}, we expect the terms $\norma{\D{}^k U}_{\L2}$ for $k\in \lb0,m\rb $  to decrease with respect to time with a rate depending on  $k$.  Consequently, we introduce 
\begin{align}
Y_k(t)&=\norma{\D{}^k U(t,\cdot)}_{\L2}\,,&
Z(t)&=\sum_{k=0}^m (1+t)^{g_k}Y_k(t)\,,\label{eq:normez}
\end{align}
with $g_k=k+ c$, in which $c$  has to be chosen so that all the terms of  $Z$  have the same decreasing in time.
In order to estimate  $Z$,  we apply the operator $\D{}^k$ to \Ref{eq:matrice}, we make the scalar product with  $\D{}^k U$ and we integrate on  $\reali^d$. The system (\ref{eq:matrice}) is different from the one considered by  M.~Grassin through the term $F(\D{\overline U}, \D{U}, U)$ defined in \Ref{eq:F}. We use now  \cite[Prop. 3.2 and 3.3]{Grass} to estimate the terms in common. We remind these results here:
\begin{proposition}\label{prop:rs}
Let $\alpha\in\naturali^d$ be  a $d$-uplet of size $k\geq 0$ (that is to say $\modulo{\alpha}_1 = \alpha_1+\ldots+\alpha_d=k$).  Let us denote, for  $U=(\pi,u-\bar u)$,
\begin{align*}
R_k&=-\int_{\reali^d}\D{}^k U\cdot\D{}^k\left(\sum_{j=1}^d A_j(U)\pt_j U\right)\d{x}\,,\\ 
S_k&=-\int_{\reali^d}\D{}^k U\cdot\D{}^k\left(B(\D{ \overline U}, U)+\sum_{j=1}^d \overline u_j \pt_j U\right)\d{x}\,,
\end{align*} 
then there exists a constant $C>0$ depending only on  $m,d$ such that
\begin{align*}
\modulo{R_k}&\leq C (1+t)^{-g_1-d/2}Y_k^2 Z\,,&
S_k &\leq C(1+t)^{-g_k-2}Y_k Z-\frac{k+r}{1+t}Y_k^2\,,
\end{align*}
where
\begin{align}
r&=\min(1-\frac{d}{2},(\frac{\g}{2}-1) d)\,.\label{eq:rr}
\end{align}
\end{proposition}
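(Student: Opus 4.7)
Since both bounds are recalled from \cite[Prop.~3.2, Prop.~3.3]{Grass}, the plan is to reproduce Grassin's argument, adapted to our variable~$U=(\pi,w)$, relying on the symmetry of the matrices $A_j(U)$ of (\ref{eq:AB}) and on the structure of $\D\bar u$ provided by Proposition~\ref{prop:ubar}.

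For $R_k$, the starting point is the Leibniz/commutator decomposition
\[
\D^k\bigl(A_j(U)\pt_j U\bigr)=A_j(U)\pt_j\D^k U+\bigl[\D^k,A_j(U)\bigr]\pt_j U\,.
\]
Because $A_j(U)=A_j(U)^{\mathbf T}$, integration by parts against $\D^k U$ yields
\[
-\int_{\reali^d}\D^k U\cdot A_j(U)\pt_j\D^k U\,\dx=\tfrac12\int_{\reali^d}\pt_j A_j(U)\,|\D^k U|^2\,\dx\,,
\]
which is controlled by $C\norma{\D U}_{\L\infty}Y_k^2$. A Moser/Kato--Ponce commutator estimate then gives
$\norma{[\D^k,A_j(U)]\pt_j U}_{\L2}\leq C\norma{\D U}_{\L\infty}Y_k$ since $A_j$ is smooth in $U$. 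It remains to bound $\norma{\D U}_{\L\infty}$ by the weighted norm $Z$. Using the Gagliardo--Nirenberg interpolation
$\norma{\D U}_{\L\infty}\leq C\,Y_1^{1-\theta}Y_m^{\theta}$ with $\theta=\tfrac{d}{2(m-1)}$ and inserting the weights $(1+t)^{g_k}Y_k\leq Z$ produces exactly the factor $(1+t)^{-g_1-d/2}Z$, hence the stated bound on $R_k$.

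For $S_k$, the contribution of $\sum_j\bar u_j\pt_j U$ is treated by the same symmetrisation trick: an integration by parts of the principal part gives $\tfrac12\int\div\bar u\,|\D^k U|^2$, while the leading commutator produces, via Leibniz, terms containing exactly one derivative of $\bar u$ acting as a matrix on $\D^k U$. Using
$\D\bar u=\tfrac{\Id}{1+t}+\tfrac{K}{(1+t)^2}$,
all these ``rigid'' terms combine into an explicit coefficient of the form $-\tfrac{k}{1+t}Y_k^2$ plus a remainder absorbed in $C(1+t)^{-g_k-2}Y_k Z$, in complete analogy with \cite{Grass}. The source $B(\D\bar U,U)$ is handled similarly: on the $\pi$--component, $\D^k\bigl(\tfrac{\g-1}{2}\pi\,\div\bar u\bigr)$ is essentially $\tfrac{(\g-1)d}{2(1+t)}\D^k\pi$; on the $w$--component, $\D^k\bigl((w\cdot\nabla)\bar u\bigr)$ is essentially $\tfrac{1}{1+t}\D^k w$. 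Collecting these leading pieces with the $+\tfrac{d/2}{1+t}Y_k^2$ arising from $\tfrac12\int\div\bar u\,|\D^k U|^2$ gives a coefficient $-\tfrac{1-d/2}{1+t}$ in front of $\norma{\D^k w}_{\L2}^2$ and $-\tfrac{(\g/2-1)d}{1+t}$ in front of $\norma{\D^k\pi}_{\L2}^2$. The worse of these two is precisely $-\tfrac{r}{1+t}$ with $r$ as in \Ref{eq:rr}, which, combined with the $-\tfrac{k}{1+t}Y_k^2$ coming from the transport commutator, yields the announced $-\tfrac{k+r}{1+t}Y_k^2$. All remaining subleading terms involve $\D^2\bar u$ or the matrix $K$, hence carry an extra factor $(1+t)^{-1}$; after interpolation they are absorbed in $C(1+t)^{-g_k-2}Y_k Z$.

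The delicate step is this last bookkeeping: one must match the different decay rates $(1+t)^{-g_k-2}$ coming from the remainders with the weights defining $Z$, which forces the precise choice of the exponent $c$ in $g_k=k+c$ and explains the appearance of $r$ as the minimum of two competing contributions. Once this balance is made, the remaining commutator terms follow from Moser-type inequalities in $\H{m}(\reali^d)$ with $m>1+d/2$, exactly as in \cite[Sec.~3]{Grass}.
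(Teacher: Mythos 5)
Your sketch is correct and follows exactly the route the paper intends: the paper gives no proof of this proposition but recalls it from Grassin \cite[Prop.~3.2 and 3.3]{Grass}, and your reconstruction (symmetric-hyperbolic energy identity plus commutator/Moser estimates for $R_k$, and the explicit structure $\D\bar u=\frac{\Id}{1+t}+\frac{K}{(1+t)^2}$ producing the competing coefficients $1-\frac d2$ and $(\frac{\g}{2}-1)d$ whose minimum is $r$ for $S_k$) is precisely that argument. No discrepancy to report.
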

We have now to estimate $-\int_{\reali^d} \D{}^k U\cdot \D{}^k(F(\D{\overline U}, \D{U}, U))$.
Let us  denote  $\check b=\frac{\g-1}{2}\tilde b$ and
\begin{align*}
 I&=-\check b \int_{\reali^d} \D{}^k U\cdot \D{}^k\big(\pi^{\e} 
\big(\begin{array}{c}
\div w\\ 
\nabla \pi
\end{array}
\big)\big),
& J&=-\check b \int_{\reali^d} \D{}^k U\cdot \D{}^k\big(\pi^{\e} 
\big(\begin{array}{c}
\div \overline u\\  0
\end{array}
\big)\big)
\end{align*}
so that $-\int_{\reali^d} \D{}^k U\cdot \D{}^k(F(\D{\overline U}, \D{U}, U))=I+J$. A priori, $J$  is easier to estimate than $I$. However the estimate of $I$ is possible since the matrix $\pi^\e\left(\begin{array}{cc} 0 &\xi^{\mathbf{T}}\\ \xi& 0\end{array}\right)$ is symmetric.

\paragraph{Estimate of $I$.} We show in Section \ref{sec:lem} the following
\begin{lemma}\label{lem:I} 
With the notations introduced in Section  \ref{sec:isen}
\begin{equation}\label{eq:Iise}
\modulo{I}\leq C \norma{\pi}_{\L\infty}^{\e-1}\norma{\D{U}}_{\L\infty}\norma{\D{}^k U}_{\L2}^2\,.
\end{equation}
\end{lemma}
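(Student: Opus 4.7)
The plan is to exploit the hidden symmetric quasilinear structure behind $I$. Writing $U=(\pi,w)^{\mathbf{T}}$, observe that
\[
\pi^{\e}\begin{pmatrix}\div w \\ \nabla \pi\end{pmatrix} = \sum_{j=1}^{d} M_j(\pi)\,\pt_j U,
\]
where $M_j(\pi)$ is the $(d+1)\times(d+1)$ matrix with entry $\pi^{\e}$ in positions $(1,j{+}1)$ and $(j{+}1,1)$ and zeros elsewhere; each $M_j(\pi)$ is symmetric, which is precisely the observation advertised just before the lemma. Thus $I = -\check b\sum_j \int \D{}^k U\cdot \D{}^k(M_j(\pi)\,\pt_j U)\,\dx$.

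Next I split by commuting $\D{}^k$ past $M_j(\pi)$: write $I = I_0 + I_1$ where $I_0 = -\check b\sum_j \int \D{}^k U\cdot M_j(\pi)\,\pt_j \D{}^k U\,\dx$ is the principal part and $I_1 = -\check b\sum_j \int \D{}^k U\cdot [\D{}^k, M_j(\pi)]\pt_j U\,\dx$ is the commutator remainder. For $I_0$, the symmetry of $M_j(\pi)$ makes the usual symmetric-hyperbolic trick work: integrating by parts in $x_j$ one obtains
\[
I_0 = \frac{\check b}{2}\sum_j \int \D{}^k U\cdot (\pt_j M_j(\pi))\,\D{}^k U\,\dx.
\]
Since $M_j(\pi)$ has only two nonzero entries, both equal to $\pi^{\e}$, one has $\modulo{\pt_j M_j(\pi)}\leq \e\,\pi^{\e-1}\modulo{\pt_j\pi}$, whence $\modulo{I_0}\leq C\norma{\pi}_{\L\infty}^{\e-1}\norma{\D\pi}_{\L\infty}\norma{\D{}^k U}_{\L2}^2$, which already fits inside \Ref{eq:Iise}.

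For $I_1$ I invoke the classical Moser commutator estimate with $f=\pi^{\e}$ and $g=\pt_j U$, namely
\[
\norma{[\D{}^k,\pi^{\e}]\pt_j U}_{\L2} \leq C\bigl(\norma{\nabla(\pi^{\e})}_{\L\infty}\norma{\D{}^{k-1}\pt_j U}_{\L2} + \norma{\D{}^k(\pi^{\e})}_{\L2}\norma{\pt_j U}_{\L\infty}\bigr).
\]
The first term is bounded by $C\norma{\pi}_{\L\infty}^{\e-1}\norma{\D\pi}_{\L\infty}\norma{\D{}^k U}_{\L2}$, since $\nabla(\pi^{\e})=\e\pi^{\e-1}\nabla\pi$. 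For the second I need the composition estimate $\norma{\D{}^k(\pi^{\e})}_{\L2}\leq C\norma{\pi}_{\L\infty}^{\e-1}\norma{\D{}^k \pi}_{\L2}$, with the sharp $\L\infty$ prefactor. This is where the case dichotomy of Theorem \ref{thm:VdW} comes in: if $\e\in\naturali$, iterated application of the standard Moser product inequality directly yields the bound; if $\e\geq m\geq k$, then $F(x)=x^{\e}$ is of class $\C{m}$ on $[0,\norma{\pi}_{\L\infty}]$ with $\modulo{F^{(j)}(x)}\leq C_j\norma{\pi}_{\L\infty}^{\e-j}$ for $0\leq j\leq m$, so the Moser composition estimate produces the same inequality. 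Applying Cauchy--Schwarz in $I_1$ yields $\modulo{I_1}\leq C\norma{\pi}_{\L\infty}^{\e-1}\norma{\D U}_{\L\infty}\norma{\D{}^k U}_{\L2}^2$, and summing with $I_0$ gives \Ref{eq:Iise}.

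The only genuinely delicate step is the composition estimate with the sharp $\norma{\pi}_{\L\infty}^{\e-1}$ factor when $\e$ is not an integer; this is exactly what forces the second alternative $\e\geq m$ in the hypotheses of Theorem \ref{thm:VdW}. Everything else is the standard symmetric-quasilinear machinery — integration by parts on the principal part, Moser commutator on the Leibniz tail — applied to the $(d+1)\times(d+1)$ off-diagonal matrices $M_j(\pi)$ identified above.
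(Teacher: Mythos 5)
Your proof is correct and follows essentially the same route as the paper: both exploit the symmetry of the off-diagonal matrices to kill the top-order term by integration by parts (your $I_0$), and both control the remaining Leibniz/commutator terms by Moser-type estimates combined with the composition bound $\norma{\D{}^k(\pi^{\e})}_{\L2}\leq C\norma{\pi}_{\L\infty}^{\e-1}\norma{\D{}^k\pi}_{\L2}$ (the paper's Lemma \ref{lem:fp}, whose integer/$\e\geq m$ dichotomy you correctly identify). The only difference is presentational: you cite the classical Moser commutator estimate as a black box, whereas the paper re-derives the needed bounds term by term via H\"older and Gagliardo--Nirenberg interpolation.
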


\paragraph{Estimate of  $J$. }
Here, we divide $J$ in two parts: a first part $J_1$ which contains only first order derivatives of $\overline u$, and a second part  $J_2$ in which all the derivatives of  $\overline u$ are at least of order 2. More precisely,
\begin{align*}
J_1&=-\check  b \int_{\reali^d }\sum_{\alpha\in \naturali^d \mid \modulo{\alpha}_1=k}(\pt^\alpha \pi)( \pt^\alpha \pi^\e)\div \overline u \,,\\
J_2&= J-J_1\,.
\end{align*}
For $J_1$,  we use the first point of Proposition \ref{prop:ubar} giving the decreasing in time of  $\D{ \overline u}$,  and  Lemma \ref{lem:fp} giving the estimate:
\[
\norma{\pt^\alpha(\pi^\e)}_{\L2}\leq \norma{\pi}^{\e-1}_{\L\infty}\norma{\D{}^k\pi }_{\L2}\,,
\] 
for all $d$-uplet $\alpha\in \naturali^d$ of size $k$, that is to say satisfying $\modulo{\alpha}_1=\sum \alpha_i=k$.\\

We obtain, using the Cauchy-Schwarz inequality:
\begin{align}
\modulo{J_1}&\leq \check b \norma{\pt^\alpha \pi}_{\L2}\norma{\pt^\alpha (\pi^\e)}_{\L2} \norma{\div \overline u}_{\L\infty}\nonumber\\
&\leq \frac{C}{1+t}\norma{\D{}^k U}_{\L2}^2\norma{\pi}_{\L\infty}^{\e-1}\,.\label{eq:J1}
\end{align}


For $J_2$,  we prove in Section \ref{sec:lem} the following 
\begin{lemma}\label{lem:J2} 
With the notations introduced in Section  \ref{sec:isen},  there exists a constant $C>0$ such that
\begin{equation}\label{eq:J2}
\modulo{J_2}\leq C(1+t)^{d_k} \norma{\D{}^k U}_{\L2} Z^\e\,,
\end{equation}
where $d_k=(-g_1-\frac{d}{2}+1)(\e-1)-g_k-2$.
\end{lemma}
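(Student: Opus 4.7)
The plan is to expand $D^k(\pi^\nu\,\mathrm{div}\,\overline u)$ by the Leibniz formula into a finite sum of products $D^{\alpha'}(\pi^\nu)\,D^{\alpha''}(\mathrm{div}\,\overline u)$ with $|\alpha'|+|\alpha''|=k$. The contributions with $|\alpha''|=0$ are exactly the terms collected in $J_1$, so $J_2$ retains only those pieces in which $\overline u$ is differentiated at least twice, which is precisely where the decay estimates in items 2 and 3 of Proposition~\ref{prop:ubar} become effective.

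For a generic term with $|\alpha'|=k-l$ and $|\alpha''|=l\in\{1,\ldots,k\}$, I would apply Cauchy--Schwarz to pull out the factor $\norma{D^k\pi}_{\L2}\leq Y_k\leq Z(1+t)^{-g_k}$. The remaining $\norma{D^{\alpha'}(\pi^\nu)\,D^{l+1}\overline u}_{\L2}$ is handled by a H\"older split in which the factor carrying more derivatives is placed in $\L2$ and the other in $\L\infty$; this is legitimate at every intermediate order since $m>1+d/2$ gives $\H{m-j}\hookrightarrow\L\infty$. The composition norms of $\pi^\nu$ are controlled by Lemma~\ref{lem:fp}, in the form $\norma{D^j(\pi^\nu)}_{\L2}\leq C\norma{\pi}_{\L\infty}^{\nu-1}\norma{D^j\pi}_{\L2}$, and $\norma{\pi}_{\L\infty}$ itself is bounded by Gagliardo--Nirenberg interpolation between $Y_0$ and $Y_m$ by $CZ(1+t)^{-(c+d/2)}$, so that $\norma{\pi}_{\L\infty}^{\nu-1}\leq CZ^{\nu-1}(1+t)^{-(c+d/2)(\nu-1)}$. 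The derivatives of $\overline u$ are absorbed by Proposition~\ref{prop:ubar}: $\norma{D^{l+1}\overline u}_{\L2}\leq K_{l+1}(1+t)^{d/2-l-2}$ when $\overline u$ occupies the $\L2$-slot, or the $\L\infty$-bound of item 3 combined with Sobolev embedding of the $\L2$ estimates when it occupies the $\L\infty$-slot.

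In the extremal case $l=k$ (i.e.\ all derivatives thrown onto $\overline u$, with $|\alpha'|=0$), collecting exponents gives a total time decay of $-(c+d/2)(\nu-1)-(k+c)-2 = d_k$, exactly as claimed. The main obstacle is to verify that every intermediate split $1\leq l\leq k-1$ yields a decay rate no slower than $(1+t)^{d_k}$: each derivative transferred from $\overline u$ onto $\pi^\nu$ costs the $\overline u$ factor one power of $(1+t)^{-1}$ by Proposition~\ref{prop:ubar}, but it also improves the time weight on the $\pi^\nu$ factor by the same amount thanks to the arithmetic choice $g_j=j+c$ built into $Z$, so the balance is exact term by term. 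Summing over the multi-indices of length $k$ then yields the global bound $|J_2|\leq C(1+t)^{d_k}\norma{D^k U}_{\L2}Z^\nu$.
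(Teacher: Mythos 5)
Your overall skeleton (Leibniz expansion, identification of $J_1$ as the $|\alpha''|=0$ terms, Cauchy--Schwarz to extract $\norma{\D{}^k U}_{\L2}$, and the exponent bookkeeping showing that each derivative moved from $\overline u$ to $\pi^\nu$ is time-neutral) agrees with the paper's, and your arithmetic for the extremal case does give $d_k$. But the central analytic step is wrong as stated. You justify placing the factor with fewer derivatives in $\L\infty$ by claiming ``this is legitimate at every intermediate order since $m>1+d/2$ gives $\H{m-j}\hookrightarrow\L\infty$.'' That embedding requires $m-j>d/2$, and in the intermediate terms $j$ ranges up to $k-1$ with $k$ possibly equal to $m$; already for $j=m/2$ (say $k=m$, derivatives split evenly) you would need $m/2>d/2$, i.e.\ $m>d$, which does not follow from $m>1+d/2$ once $d\geq 2$. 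So the naive ``more derivatives in $\L2$, fewer in $\L\infty$'' H\"older split simply is not available for the middle of the range $2\leq l\leq k-1$. The same objection applies to your claim that $\L\infty$ bounds on higher derivatives of $\overline u$ follow from Sobolev embedding of the $\L2$ estimates of Proposition \ref{prop:ubar}: those $\L2$ bounds stop at order $m+1$, so they cannot produce $\norma{\D{}^{j}\overline u}_{\L\infty}$ for $j$ near $k$.

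This is precisely where the paper has to work: for the intermediate orders it interpolates in Gagliardo--Nirenberg fashion between the \emph{second} derivative in $\L\infty$ and the $(k-1)$-st derivative in $\L2$ (Lemma \ref{lem:gn2}), with carefully matched exponents $q=2\frac{k-3}{l-2}$, $q'=2\frac{k-3}{k-l-1}$; this needs $\D{}^2(\pi^\nu)\in\L\infty$, hence $\D{}^2U\in\L\infty$, which is only guaranteed when $m>2+d/2$. When $m\leq 2+d/2$ (and $d\geq 4$, the only nontrivial subcase) the paper replaces this by a fractional Sobolev argument (Lemma \ref{lem:dn}) with the exponent $h=\frac12(k+1+d/2)$. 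Your proposal contains neither mechanism and no case distinction, so the proof does not go through for the intermediate terms; to repair it you would need to introduce exactly this two-point interpolation (or the fractional embedding) in place of the $\L2$/$\L\infty$ split.
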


\paragraph{Re-assembling of the estimates. }
Assembling the results of  Proposition \ref{prop:rs}, the estimates (\ref{eq:Iise})--(\ref{eq:J1})--(\ref{eq:J2}) of $I$ and $J$, and finally using Lemma \ref{lem:Z}, we obtain
\begin{align}
\frac{1}{2}\frac{\d{Y_k^2}}{\d{t}}+\frac{k+r}{1+t}Y_k^2 & 
\leq C(1+t)^\beta Y_k^2 Z +C' (1+t)^{-g_k-2}Y_k Z +C (1+t)^{\beta+(\beta+1)(\e-1)}Y_k^2 Z^\e \nonumber\\
& +C(1+t)^{(\beta+1)(\e-1)-1}Y_k^2 Z^{\e-1} +C(1+t)^{(\beta+1)(\e-1)-g_k-2}Y_k  Z^\e\,,\label{eq:yk}
\end{align}
where we have denoted
\[
\beta=-g_1-\frac{d}{2}\,.
\]
Here, we choose the constant  $c$ introduced in  $g_k=k+c$ in order to have  $\beta=0$ and consequently a good decreasing in time. This means to require $g_1+d/2=0$  and $c=-1-d/2$. Consequently, we have
\[
g_k=k-\frac{d}{2}-1\,.
\]
We introduce now
\[
a=1+d/2+r>1\,,
\]
so that $k+r=g_k+a$. 
We can now  divide by $Y_k$ in (\ref{eq:yk}), multiply by  $(1+t)^{g_k}$ and  summate on $k$  to obtain a differential inequality in $Z$ (defined in  (\ref{eq:normez})):
\begin{align*}
\frac{\d{Z}}{\d{t}} +\frac{a}{1+t}Z&\leq C\left(Z^2 +\frac{Z}{(1+t)^2} +(1+t)^{\e-1}Z^{\e+1}\right)\,.
\end{align*}
Then, we  introduce $\zeta (t)=(1+t)^a\exp\left(\frac{C}{1+t}\right)Z(t)$  and we deduce from the inequality just above 
\[
\frac{\d{\zeta}}{\d{t}}\leq \frac{C}{(1+t)^a}(\zeta^2+\zeta^{\e+1}).
\] 
Besides, $\zeta^2+\zeta^{\e+1}\leq 2\zeta(1+\zeta^\e)$ for $\e\geq 2$. Therefore
\[
\frac{1}{\zeta(1+\zeta^{\e})}\frac{\d{\zeta}}{\d{t}}\leq \frac{C}{(1+t)^a}\,,
\]
that is to say
\[
\frac{\d{}}{\d{t}}(f(\zeta(t)))\leq \frac{C}{(1+t)^a}\,,
\]
with  $f(x)=\frac{1}{\e}\ln \left( \frac{x^\e}{1+x^\e}\right)$. By integration, we  obtain
$f(\zeta(t))+\frac{C}{a-1}(1+t)^{-(a-1)}\leq f(\zeta(0))+\frac{C}{a-1}$. As $f$ is strictly increasing and one-by-one from $\rpis$ to $\reali^*_-$, if $f(\zeta(0)) +C/(a-1)$ belongs to the set on which $f^{-1}$ is well-defined, we obtain
\[
\zeta(t)\leq f^{-1}\left(f(\zeta(0)) +C/(a-1)\right)\,.
\]
But $f(\zeta(0)) +C/(a-1)\leq 0$ is only possible if $\zeta(0)$ is small enough, since $f(x)$ tends to $-\infty$ when $x$ tends to  0, and  $\zeta(0)=\exp\left({C}\right)Z(0)=\exp(C)\norma{\pi_0}_{\H{m}}$.  The smallness condition is satisfied thanks to the hypothesis \textbf{(H1)} with $0<\varepsilon_0 <f^{-1}\left(\frac{-C}{a-1}\right)$.

\paragraph{Conclusion.}
We have obtained the following inequalities
\begin{align*}
Z(t)&\leq \frac{1}{(1+t)^a}\exp\left(\frac{-C}{1+t}\right)f^{-1}\left(f(e^C Z(0)) +\frac{C}{a-1}\right)\,,\\
 Y_k(t)&\leq  (1+t)^{-g_k}Z(t)\\
&\leq \frac{1}{(1+t)^{k+r}}\exp\left(\frac{-C}{1+t}\right)f^{-1}\left(f(e^C Z(0)) +\frac{C}{a-1}\right)\,.
\end{align*}
The $\L2$ norms of the derivatives of the local solution $U$ consequently do not explode in finite time since $t\mapsto  \frac{1}{(1+t)^a}\exp\left(\frac{-C}{1+t}\right)$ do not explode in finite time. Let us assume that the regular solution exists to time $T$. Our estimates give us, for all $t\in [0,T[$, for $C_T$  not depending on $T$, 
\[
 \norma{(\pi,u-\bar u)(t)}_{\H{m}}\leq C_T\,.
\]
Since  $\spt \pi \subset B(0,R)$, our construction is possible when the norm of  $(\pi,u)$ is bounded in  $\H{m}(\reali^d)$ and
\[
  \norma{(\pi,u)(t)}_{\H{m}(B(0,R))}\leq C_T+\norma{\bar u (t)}_{\H{m}(B(0,R))} \leq K_T\,.
\]
We can associate to the constant  $K_T$ a time of existence  $T_*(K_T)$  for the local in time solution. Let  $t_1\in ]0,T[$ be such that $t\geq T-T_*(K_T)$. Introducing the solution with initial condition $(\pi(t_1), u(t_1))$,  we succeed in prolongating the solution up to time $T$, which finishes the proof.

\subsection{General Case}
\subsubsection{Local in time Existence}
As in the isentropic case, we first seek to symmetrise the system. Let us denote
\[
\pi=\sqrt{\frac{\g -1}{\g}}\left(\frac{p}{\g-1}\right)^{\frac{\g -1}{2\g}} =\sqrt{\frac{\g -1}{\g}}\left(\frac{\rho}{1- b \rho}\right)^{\frac{\g -1}{2}}\exp\left(\frac{\g-1}{2\g} \frac{s}{c_v}\right)\,.
\]
The system  (\ref{eq:euler}) can be written in variables $(\pi, u,s)$:
\begin{equation}\label{eq:noi}
\left\{
\begin{array}{rcl}
e^{s/(\g c_v)}\pt_t \pi +e^{s/(\g c_v)}u\cdot \nabla\pi +\frac{\g-1}{2}  e^{s/(\g c_v)}\pi \div u&=&-\frac{\g-1}{2}\tilde b\pi^{\frac{\g +1}{\g -1}}\div u\,, \\
\pt_t u+(u\cdot \nabla)u +\frac{\g-1}{2} e^{s/(\g c_v )}\pi \nabla \pi&=&-\frac{\g-1}{2}\tilde b\pi^{\frac{\g +1}{\g -1}}\nabla\pi\,,\\
(1+t)^{-\theta}(\pt_t s+u\cdot\nabla s)&=&0\,,
\end{array}
\right.
\end{equation}
where $\tilde b=b\left(\frac{\g-1}{4\g}\right)^{\frac{1}{\g-1}}$. We introduce furthermore a parameter  $\theta$ to be determined  so that   $(1+t)^{-\theta}s$ has a decreasing in time  similar to  the estimates obtained in the isentropic case. 

In order to obtain local existence of a solution, we construct a solution by following the same strategy as in the isentropical case and using once again a property of local uniqueness, given by Proposition \ref{prop:uniloc_nonise}.

\subsubsection{Local in time Uniqueness}
We show here a similar result to the one obtained in  Section  \ref{sec:uniloc_ise} in the isentropical case.

\begin{proposition}\label{prop:uniloc_nonise}
Let $U_0=(\pi_0,u_0, s_0)^{\mathbf{T}}\in \H{m}(\reali^d)$ and $\tilde U_0$  be two initial data for  (\ref{eq:symMUK2ise}). Let  $U=(\pi,u, s)^{\mathbf{T}}$, $\tilde U$ be the two corresponding solutions defined for $0\leq  t \leq T_0$. We assume that $\norma{\D{\tilde U}}_{\L\infty([0,T_0]\times \reali^d)}<\infty$. Let $x_0\in \reali^d$ and $R \geq 0$. We denote
\begin{align}
M&=\sup \{ e^{\frac{s}{2 \g c_v}}(\frac{\g-1}{2} \modulo{\pi}(1+ \tilde b \modulo{\pi}^{\e-1})+\modulo{u})(t,x)\,, (t,x)\in [0,T_0]\times B(x_0,R) \}\,,\label{eq:Mg}\\
C_T&=\{
(t,x)\in [0,T]\times B(x_0,R-Mt)
\} \qquad\textrm{ for }T\in[0,T_1]\,,\label{eq:ctg}
\end{align}
where $T_1=\min(T_0,\eta/M)$.

If $U_0=\tilde U_0$ on $B(x_0,R)$ then $U=\tilde U$ on $C_{T_1}$.
\end{proposition}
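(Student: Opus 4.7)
The proof mirrors that of Proposition \ref{prop:uni_loc}, the new ingredients being the $s$-dependent symmetrizer from Section \ref{sec:symMUK} and the transport equation for $s$. Here is the plan.

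First, I would rewrite the full system (\ref{eq:symMUK2}) as a quasilinear system $\pt_t U + \sum_{j=1}^d a_j(U)\pt_j U = 0$ with $U = (\pi, u^{\mathbf{T}}, s)^{\mathbf{T}}$. By Proposition \ref{prop:sym2}, each $a_j(U)$ is symmetrized through multiplication on the left by the diagonal positive definite matrix
\[
S(s) = \mathrm{Diag}(1, e^{-s/(\g c_v)}, \ldots, e^{-s/(\g c_v)}, 1).
\]
A direct computation of $\det(\sum_j \xi_j a_j - \lambda I)$ shows that the eigenvalues of $\sum_j \xi_j a_j(U)$ are $u\cdot \xi$ (with multiplicity $d$) and $u \cdot \xi \pm \frac{\g-1}{2}\frac{\pi}{1-b\rho}e^{s/(2\g c_v)}|\xi|$, which upon rewriting $\pi/(1-b\rho) = \pi(1+\tilde b e^{-s/(\g c_v)}\pi^{\e-1})$ are bounded in modulus by $M$ on $B(x_0, R)$. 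Hence $M$ dominates the maximal propagation speed.

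Next, I would subtract the equations satisfied by $U$ and $\tilde U$, take the $S(s)$-weighted scalar product with $2(U - \tilde U)$, and integrate over $C_T$ to get
\[
\pt_t \bigl((U-\tilde U)^{\mathbf{T}} S(s)(U-\tilde U)\bigr) + \sum_j \pt_j \bigl((U-\tilde U)^{\mathbf{T}} S(s) a_j(U)(U-\tilde U)\bigr) = \Phi,
\]
where $\Phi$ collects the commutators $-\sum_j (U-\tilde U)^{\mathbf{T}} \pt_j(S(s) a_j(U))(U-\tilde U)$, the weight-derivative term $(U-\tilde U)^{\mathbf{T}} \pt_t S(s)(U-\tilde U)$, and the fluctuation $-2\sum_j (U-\tilde U)^{\mathbf{T}} S(s)(a_j(U)-a_j(\tilde U))\pt_j \tilde U$. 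The divergence theorem applied to $C_T$ produces contributions at $\{0\}\times B(x_0,R)$, at $\{T\}\times B(x_0,R-MT)$, on the lateral surface $\mathcal{C}$, and the bulk $\int_{C_T}\Phi$. The lateral term is non-negative because $M$ is chosen so that, for every outer spatial unit normal $\nu$ to $\mathcal{C}$, the symmetric matrix $M S(s) + \sum_j S(s) a_j(U)\nu_j$ is positive semi-definite, its eigenvalues in the $S(s)$-inner product being precisely those computed above.

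Finally, I would estimate $|\Phi|$ pointwise by
\[
C\bigl(\norma{\D U}_{\L\infty(C_T)}(1+\norma{\pi}_{\L\infty(C_T)}^{\e-1}) + \norma{\D{\tilde U}}_{\L\infty(C_T)}\bigr)\modulo{U-\tilde U}^2,
\]
treating the $\pt_t S(s)$ piece via the transport identity $\pt_t s = -u\cdot \nabla s$ so that it is absorbed into the bracket. Gronwall's lemma applied to $t \mapsto \int_{B(x_0, R-Mt)} (U-\tilde U)^{\mathbf{T}} S(s)(U-\tilde U)(t,x)\,\dx$ then yields $U = \tilde U$ on $C_{T_1}$ whenever $U_0 = \tilde U_0$ on $B(x_0, R)$. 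The principal novelty relative to the isentropic proof is precisely the $\pt_t S$ contribution, which has no analogue in Proposition \ref{prop:uni_loc}; it is, however, of the same order as the commutators already handled there, so no new type of estimate is required.
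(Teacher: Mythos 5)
Your proposal follows essentially the same route as the paper: an energy estimate on the backward cone $C_T$ with the $s$-dependent diagonal symmetrizer, positivity of the lateral boundary term guaranteed by the choice of $M$, the time derivative of the weight handled through $\pt_t s=-u\cdot\nabla s$, and Gronwall's lemma to conclude. The only (immaterial) differences are that the paper checks the lateral positivity by a direct algebraic bound rather than an eigenvalue computation, and it observes that the $\pt_t$-of-the-weight contribution cancels exactly against part of the $\pt_j\alpha_j$ commutator instead of merely absorbing it into the Gronwall constant.
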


\begin{proof}
Let $U_0=(\pi_0,u_0, s_0), \tilde U_0=(\tilde\pi_0,\tilde u_0, \tilde s_0)$  be two initial data for (\ref{eq:noi}) such that $U_0\in \H{m}$. Let $U,\tilde U$  be the two solutions of the associated Cauchy problem. We assume that these solutions are defined on  $[0,T_0]$ with $T_0>0$.  Let furthermore $x_0\in \reali^d$, $\eta\in \rpis$ and $M$, $C_T$  be as in  (\ref{eq:Mg}) and (\ref{eq:ctg}).  Then we have 
\[
\alpha_0(U)\pt_t U +\sum_j \alpha_j(U)\pt_j U=0\,,
\]
where $\alpha_0=\mathrm{Diag}(e^{\frac{s}{\g c_v}}, 1,\ldots , 1,1)$ and, for all $j\in \{1,\ldots, d\}$, 
\[
 \alpha_j(U)=\left(
\begin{array}{cccc}
e^{\frac{s}{\g c_v}}u_j&0& e^{\frac{s}{\g c_v}}\frac{\g-1}{2}(\pi+\tilde b \pi^\e)&\\
0&u_j&&\\
e^{\frac{s}{\g c_v}} \frac{\g-1}{2}(\pi +\tilde b\pi^\e)&&&0\\
\vdots&&\ddots&0\\
0&0&&u_j
\end{array}
\right)\,.
\]

We introduce, for  $T\in [0,T_1]$,
\begin{equation}\label{eq:Inonise}
I=\int_{C_T}\left(\pt_t[(U-\tilde U)\cdot \alpha_0(U)(U-\tilde U)]+\sum_j \pt_j[(U-\tilde U)\cdot \alpha_j(U)(U-\tilde U)] \right)\d{x}\d{t}\,.
\end{equation}
Denoting $\modulo{U-\tilde U}_{0}^2=(U-\tilde U)\alpha_0(U)(U-\tilde U)$ and  $\left[ U-\tilde U\right]_t=\int_{B(x_0,\eta-Mt)}\modulo{(U-\tilde U)(t,x)}_0^2\d{x}$,  we obtain from the Stokes formula
\begin{align*}
I&= \int_{\pt C_T} \left((U-\tilde U)\cdot \alpha_0(U)(U-\tilde U)n_t +\sum_j (U-\tilde U)\cdot \alpha_j(U)(U-\tilde U) n_j\right)\d{\sigma}\\
&=\left[ U-\tilde U\right]_T-\left[ U-\tilde U\right]_0\\
& +\frac{1}{\sqrt{1+1/M^2}} \int_{\mathcal{C}}  \left(\modulo{U-\tilde U}_0 + \sum_j(U-\tilde U)\cdot \alpha_j(U)(U-\tilde U)\frac{x_j}{M\modulo{x}}\right)\d{\sigma}\,.
\end{align*}
But we have also
\begin{align*}
&\sum_j (U-\tilde U)\alpha_j(U)(U-\tilde U)\frac{x_j}{M\modulo{x}}\\
=&\modulo{U-\tilde U}_0^2 \frac{u\cdot x}{M\modulo{x}} +2e^{\frac{s}{\g c_v}}\frac{\g -1}{2}(\pi+\tilde b \pi^\e)(\pi-\tilde \pi)\frac{(u-\tilde u)\cdot x}{M\modulo{x}}\\
\leq& \modulo{U-\tilde U}_0^2 \frac{\modulo{u}}{M}+ e^{s/(2\g c_v)}\frac{\g-1}{2}(\modulo{\pi}+\tilde b\modulo{\pi}^\e)(e^{s/(\g c_v)}\modulo{\pi-\tilde \pi}^2 +\modulo{u-\tilde u}^2 )\frac{1}{M}\\
\leq& \frac{1}{M}\modulo{U-\tilde U}_0^2(\modulo{u}+e^{s/(2\g c_v)} \frac{\g-1}{2}(\modulo{\pi}+\tilde b\modulo{\pi}^\e) )\\
\leq &\modulo{U-\tilde U}_0^2\,,
\end{align*}
therefore
\[
I\geq \left[ U-\tilde U\right]_T-\left[ U-\tilde U\right]_0\,.
\]

Besides, we have 
\[
\alpha_0(U)\pt_t(U-\tilde U)+\sum_{j}\alpha_j(U)(U-\tilde U)=\sum_j \alpha_0(U)(\alpha_0(\tilde U)^{-1}\alpha_j(\tilde U)-\alpha_0(U)^{-1}\alpha_j(U))\pt_j \tilde U\,,
\]
thus
\begin{align*}
I=&\int_{C_T} (U-\tilde U)\pt_t \alpha_0(U) (U-\tilde U)+\sum_j (U-\tilde U)\pt_j \alpha_j(U)(U-\tilde U)\\
& +2\int_{C_T}\sum_j \alpha_0(U)(\alpha_0(\tilde U)^{-1}\alpha_j(\tilde U)-\alpha_0(U)^{-1}\alpha_j(U))\pt_j \tilde U\,.
\end{align*}
Let us denote 
\begin{align*}
I_1&=\int_{C_T} (U-\tilde U)\pt_t \alpha_0(U) (U-\tilde U)\,,\\
I_2&=\sum_j (U-\tilde U)\pt_j \alpha_j(U)(U-\tilde U)\,,\\
I_3&=2\int_{C_T}\sum_j \alpha_0(U)(\alpha_0(\tilde U)^{-1}\alpha_j(\tilde U)-\alpha_0(U)^{-1}\alpha_j(U))\pt_j \tilde U\,.
\end{align*}
We obtain by computing explicitly $\pt_t \alpha_0$ and $\pt_j\alpha_j$
\[
I_1=-\int_{C_T}\frac{1}{\g c_v }e^{s/(\g c_v)}(\pi-\tilde \pi)^2 (u\cdot \nabla s)\,
\]
and 
\begin{align*}
I_2&=\int_{C_T}\frac{1}{\g c_v }e^{s/(\g c_v)}(\pi-\tilde \pi)^2 (u\cdot \nabla s)+\int_{C_T}\div (u)\modulo{U-\tilde U}_0^2 \\
&\qquad +\int_{C_T}(\g-1)e^{s/(\g c_v)}(\pi-\tilde \pi)(u-\tilde u)\cdot\left[\frac{1}{\g c_v}(\pi+\tilde b \pi^\e) \nabla_s+(1+\tilde b\e \pi^{\e-1})\nabla \pi\right]\\
&\leq  -I_1+\norma{DU}_{\L\infty}(1+C(1+\norma{\pi}_{\L\infty})(1+\tilde b\e\norma{\pi^{\e-1}}_{\L\infty}))\int_0^T\left[U-\tilde U\right]_t \d{t} \,.
\end{align*}
Finally, we bound $I_3$ by computing
\begin{align*}
&\alpha_0(\tilde U)^{-1}\alpha_j(\tilde U)-\alpha_0(U)^{-1}\alpha_j(U)\\
=&-\left(
\begin{array}{c|c|c}
u_j-\tilde u_j& \frac{\g-1}{2}(\pi-\tilde \pi+\tilde b(\pi^\e-\tilde \pi^\e))\mathbf{e_j}^{\mathbf{T}}&0\\
\hline
\Psi\mathbf{e_j} &
(u_j-\tilde u_j)I_d&0\\
 \hline
 0&0&u_j-\tilde u_j
\end{array}
\right)\,,
\end{align*}
where $\Psi=\frac{\g-1}{2}(e^{\frac{s}{\g c_v}}(\pi+\tilde b\pi^\e)-e^{\frac{\widetilde s}{\g c_v}}(\tilde\pi+\tilde b\tilde \pi^\e))$.
We denote $R=\max(\norma{s}_{\L\infty},\norma{\tilde s}_{\L\infty})$. The exponential function being convex, we have:
\[
\modulo{e^{s/(\g c_v)}-e^{\tilde s/(\g c_v)}}\leq 1/(\g c_v)e^{R/(\g c_v)}\modulo{s-\tilde s}\,.
\]
So,
\begin{align*}
e^{s/(\g c_v)}(\pi+\tilde b\pi^\e)-e^{\tilde s/(\g c_v)}(\tilde\pi+\tilde b\tilde \pi^\e)&\leq  C (1+R^{\e-1})(\modulo{\pi}\modulo{s-\tilde s}+\modulo{\pi-\tilde \pi})\,,
\end{align*}
which gives us
\begin{align*}
I_3\leq&  \left(\norma{D\tilde U}_{\L\infty}(1+C(1+\norma{\pi}_{\L\infty})(1+\tilde b\e\norma{\pi^{\e-1}}_{\L\infty}))\right.\\
&\left.+\norma{DU}_{\L\infty}(1+C(1+\norma{\pi}_{\L\infty})(1+\tilde b\e\norma{\pi^{\e-1}}_{\L\infty}))\right)\int_0^T\left[U-\tilde U\right]_t \d{t}\,.
\end{align*}
Finally, we obtain
\[
\left[U-\tilde U\right]_T-\left[U-\tilde U\right]_0\leq C (1+C(1+\norma{\pi}_{\L\infty})(1+\tilde b\e\norma{\pi^{\e-1}}_{\L\infty}))\int_0^T\left[U-\tilde U\right]_t\d{t}.
\]
We conclude thanks to Gronwall Lemma
\[
\frac{1}{2}\int_{B(x_0,R-MT)}\modulo{U-\tilde U}^2(T,x)\d{x}\leq\frac{1}{2}e^{C'T} \int_{B(x_0,R)}\modulo{U_0-\tilde U_0}^2(x)\d{x}
\]
where $C'=C\norma{\D{U}}_{\L\infty(C_T)}(1+\norma{\pi}_{\L\infty(C_T)}^{\e-1})$.
\end{proof}

\subsubsection{Estimates}
The system (\ref{eq:noi})  can be written 
\begin{equation}\label{eq:nonise}
A_0\pt_tV +\sum_{j=1}^d A_j \pt_j=-B(\D{\overline V},V)-\sum C_j(\overline V)\pt_j V -\frac{\g-1}{2}\tilde b\pi^\e\left(\begin{array}{c} 
\div (w+\overline u)\\ \nabla \pi\\0 
\end{array}\right)\,,
\end{equation}
where $V=(\pi,u-\bar u,s)\in \reali^{d+2}$, $\overline V=(0,\bar u,0)$ and, denoting $(\mathbf{e_j})_{1\leq j\leq d}$  the standard orthonormal basis of  $\reali^d$,
\begin{align*}
A_0&=\mathrm{Diag}(e^{s/(\g c_v)}, 1,\ldots,1,(1+t)^{-\theta})\in \mathcal{M}_{d+2}(\reali)\,,\\
C_j&=\bar u_j A_0\,,\\ 
A_j&= \left(
\begin{array}{c|ccc|c}
 e^{s/(\g c_v)}w_j&&\frac{\g-1}{2} e^{s/(\g c_v)}\pi \mathbf{e_j}^{\mathbf{T}}&&0\\
\hline
&&&&\\
\frac{\g-1}{2} e^{s/(\g c_v)}\pi \mathbf{e_j}&&w_j I_d&&0\\
&&&&\\
\hline
0&&0&&(1+t)^{-\theta}w_j
\end{array}
\right),\\
 B&=
\left(
\begin{array}{c}
\frac{\g-1}{2} e^{s/(\g c_v)}\pi \div \bar u\\
\\
(w\cdot\nabla)\bar u\\
\\
0
\end{array}
\right).
\end{align*}
We also introduce $N_k(t)=\left(\int_{\reali^d} \D{}^k V\cdot A_0(V)\D{}^kV\d{x}\right)^{1/2}$ and $Z(t)=\sum_{k=0}^m (1+t)^{g_k}N_k(t)$ with $g_k=k+r-a$ and $r=\theta/2-d/2$, $\theta\in ]0,\min(1,\frac{\g-1}{2})]$.
In order to obtain energy estimates, we apply $\D{}^k$ to (\ref{eq:nonise}) and we multiply it by $\D{}^k V$. Then, we integrate on $\reali^d$. The additional term with respect to the Perfect Polytropic gases considered by M. Grassin \cite{Grass} is now
\[
F^*(\D{\overline V}, \D{V}, V)=\frac{\g-1}{2}\tilde b\pi^\e \left( 
\div (w+\overline u), \nabla \pi , 0 
\right)\in \reali^{d+2}\,.
\]
With the notations $U=(\pi,w)\in \reali^{d+1}$ and $F$ as in \Ref{eq:F}, the last component of $F^*$ being 0, we have 
\[
\int_{\reali^d} \pt^k V \pt^k(F^*(\D{\overline V},\D{V}, V) )\d{x}= \int_{\reali^d} \pt^k U\pt^k(F(\D{\overline U},\D{U}, U))\d{x}\,.
\]
Using the estimates (\ref{eq:Iise})--(\ref{eq:J1})--(\ref{eq:J2})  we finally get an estimate  on $Y_k$.  The definition of the norm is slightly changed  with respect to isentropical case, however, for all  $v=(v_1\ldots,v_{d+1})\in \reali^{d+1}$, if $v^*=(v_1,\ldots, v_{d+1}, z)\in \reali^{d+2}$, we have $\norma{v}_{2}\leq e^{\norma{s_0}_{\L\infty}/(\g c_v)} {}^t\!v^* A_0 v^*$. Consequently $Y_k\leq e^{\norma{s_0}_{\L\infty}/(\g c_v)} N_k$ and the estimates on $Y_k$ obtained in the isentropical case give  an estimate on  $N_k$  in the general case.

Finally, we obtain, adding the estimate on $F$ obtained in the isentropical case to the estimates from M. Grassin in the general case:
\begin{align*}
\frac{1}{2}\frac{\d{N_k^2}}{\d{t}}+\frac{k+r}{1+t}N_k^2 & 
\leq C(1+t)^\beta N_k^2 Z +C' (1+t)^{-g_k-2}N_k Z +C (1+t)^{\beta+(\beta+1)(\e-1)}N_k^2 Z^\e\\
& +C(1+t)^{(\beta+1)(\e-1)-1}N_k^2 Z^{\e-1} +C(1+t)^{(\beta+1)(\e-1)-g_k-2}N_k  Z^\e \\
&+C\sum_{\xi\in {E}_k} N_k {Z}^{2+\xi}(1+t)^{-g_k+\beta+\xi(\beta+1+\theta/2)} 
\end{align*}
where $\beta=-g_1-\frac{d}{2}$ and
\[
E_k=\lb 0,k\rb\cup \left\{\frac{l}{k-1}\,;\;l\in \lb 1, k-1 \rb \right\}\,.
\]
Then, we choose $a$ so that  $\beta=0$, i.e.
\[
a=1+\theta/2>1\,\textrm{ with } \theta\in  ]0,\min(1,\frac{\g-1}{2})]\,.
\]
Next, we simplify by  $N_k$,  we multiply by $(1+t)^{g_k}$ and we summate on $k$ to obtain
\begin{align*}
\frac{\d{Z}}{\d{t}} +\frac{a}{1+t}Z&\leq C\left(Z^2 +\frac{Z}{(1+t)^2} +(1+t)^{\e-1}Z^{\e+1} + Z^{2+m}(1+t)^{am} \right)\,.
\end{align*}
We denote now $\zeta (t)=(1+t)^a\exp\left(\frac{C}{1+t}\right)Z(t)$  and we deduce from the inequality just above
\[
\frac{\d{\zeta}}{\d{t}}\leq \frac{C}{(1+t)^a}(\zeta^2+\zeta^{\e+1}+\zeta^{m+2}).
\]
We conclude in the same way we did in the isentropical case, replacing $\e $ by  $\e^*=\max(\e,m+1)\geq 2$, since  $\zeta^2+\zeta^{\e+1}+\zeta^{m+2}\leq 2(\zeta^2+\zeta^{\e^*+1})$.

\section{Technical tools}\label{sec:tec}
\subsection{Lemmas \ref{lem:I} and \ref{lem:J2}}\label{sec:lem}
We show here Lemma \ref{lem:I},  which states, with the notations introduced in Section  \ref{sec:isen}:
\[
\modulo{I}\leq C \norma{\pi}_{\L\infty}^{\e-1}\norma{\D{U}}_{\L\infty}\norma{\D{}^k U}_{\L2}^2\,.
\]

\begin{proofof}{Lemma \ref{lem:I}}
\begin{description} 
\item[If $k=0$,] $I=-\check b \int_{\reali^d} \pi^\e(\pi \div w+w\cdot \nabla w)\d{x}$. By integration by parts, we obtain
\begin{align*}
I&=-\check b \frac{\e}{\e+1}\int_{\reali^d}\pi^{\e+1}\div w\d{x}\leq C \norma{\pi^{\e-1}}_{\L\infty}\norma{\pi}_{\L2}^2 \norma{\div w}_{\L\infty}\,.
\end{align*}
\item[If $k\geq 1$,] $I$  is such that
\begin{align*}
I&= -\check b \sum_{\alpha\in \naturali^d\mid \modulo{\alpha}_1=k} \int_{\reali^d} \pt^\alpha\pi \pt^\alpha(\pi^\e \div w)+\sum_j\pt^\alpha w_j \pt^\alpha(\pi^\e \pt_j \pi)\,.
\end{align*}
Expanding, we find
\begin{align*}
I=-\check b \sum_{\alpha\in \naturali^d\mid \modulo{\alpha}_1=k}\int_{\reali^d} \pt^\alpha(\pi) \pi^\e\pt^\alpha( \div w)+\sum_j\pt^\alpha( w_j) \pi^\e\pt^\alpha( \pt_j \pi) +\Sigma\,,
\end{align*}
where $\Sigma$  is a sum of terms as $\int_{\reali^d} \pt^{\alpha_0} U \pt^{\alpha_1} (\pi^\e)\pt^{\alpha_2} U$, where $U$ is any of its component and  $\modulo{\alpha_0}_1=k$, $\modulo{\alpha_1}_1=l$, $\modulo{\alpha_2}_1=k+1-l$, with $l\in\lb 1,k\rb$  so that the derivatives are  of order less than $k$.  We treat first one of the terms of the preceding sum for a $d$-uplet $\alpha\in\naturali^d$, of size $k$, that is to say $\modulo{\alpha}_1=\alpha_1+\ldots+\alpha_d=k$. By integration by parts, we find
\begin{align*}
&\int_{\reali^d} \pt^\alpha(\pi) \pi^\e\pt^\alpha( \div w)+\sum_j\pt^\alpha( w_j) \pi^\e\pt^\alpha( \pt_j \pi)\\
=&\int_{\reali^d} \pi^\e\sum_j \pt_j \left(\pt^\alpha(\pi) \pt^\alpha(  w_j)\right)\\
=&-\sum_j \int_{\reali^d} \pt_j(\pi^\e)\pt^\alpha(\pi) \pt^\alpha(  w_j)\,,
\end{align*}
Hence, $I$ is a sum of terms as $\int \pt^k U \pt^l(\pi^\e) \pt^{k-l+1}U$ where $1\leq l\leq k-1$. 

Note that we used here the notation after which  $\pt^k U$ means $\pt^{\alpha} U$ for a given  $\alpha\in \naturali^d$ such that $\modulo{\alpha}_1=\sum \alpha_j =k$.  Furthermore, we write $U$ for any of its component. Consequently, $\pt^k U$ can mean $\pt^\alpha w_j$.  We will use by now this notation.

If $k\geq 1$ and  $l\neq1$, $l\neq k$,  using the Hölder inequality and the Gagliardo-Nirenberg Lemma \ref{lem:GN}, we have:
\begin{align*}
\!\!\!\!\int_{\reali^d} \pt^k U\pt^l(\pi^\e)\pt^{k-l+1} U\d{x}&\leq \norma{\pt^k U}_{\L2}\norma{\pt^l(\pi^\e) \pt^{k-l+1}U}_{\L2}\\
&\leq \norma{\pt^k U}_{\L2}\norma{\pt^l(\pi^\e)}_{\L{{2\frac{k-1}{l-1}}}} \norma{\pt^{k-l+1}U}_{\L{{2\frac{k-1}{k-l}}}}\\
&\leq \norma{\pt^k U}_{\L2}\norma{\pt(\pi^\e)}_{\L\infty}^{{1-\frac{l-1}{k-1}}} \norma{\pt^k (\pi^\e)}_{\L2}^{\frac{l-1}{k-1}}\\
&\qquad \times \norma{\pt U}_{\L\infty}^{1-\frac{k-l}{k-1}} \norma{\pt^k U}_{\L2}^{\frac{k-l}{k-1}}\,.
\end{align*}
We use next Lemma \ref{lem:fp} and the inequality
\begin{align*}
\norma{\pt (\pi^\e)}_{\L\infty}&=\norma{\e \pi^{\e-1}\pt \pi}_{\L\infty}\leq C\norma{\pi}_{\L\infty}^{\e-1} \norma{\pt \pi}_{\L\infty}\,,
\end{align*}
to obtain
\begin{align*}
\int_{\reali^d} \pt^k U\pt^l(\pi^\e)\pt^{k-l+1} U\d{x}&\leq C \norma{\pt^k U}_{\L2}^2 \norma{\pt U}_{\L\infty} \norma{\pi}_{\L\infty}^{\e-1}\,.
\end{align*}

If $k\geq 1$ and $l=k$,  we have to estimate 
\begin{align*}
\int_{\reali^d}\pt^k U\pt^k(\pi^\e)\pt U&\leq \norma{\pt^k U}_{\L2}\norma{\pt^k(\pi^\e)}_{\L2}\norma{\pt{ U}}_{\L\infty}\\
&\leq \norma{\pt^k U}_{\L2}\norma{\pi}^{\e-1}_{\L\infty}\norma{\pt^k \pi}_{\L2}\norma{\pt{U}}_{\L\infty}\,. 
\end{align*}

If $k\geq 1$ and $l=1$, we have to estimate
\begin{align*}
\int_{\reali^d}\pt^k U\pt (\pi^\e)\pt^k U&\leq \norma{\pt^k U}_{\L2}\norma{\pt^k(\pi^\e)}_{\L2}\norma{\pt{ U}}_{\L\infty}\\
&\leq \norma{\pt^k U}_{\L2}^2 \norma{\pt^1 (\pi^{\e})}_{\L\infty}\,.
\end{align*}
The inequality $\norma{\pt^1(\pi^\e)}_{\L\infty}\leq C \norma{\pi}_{\L\infty}^{\e-1} \norma{\pt^1 \pi}_{\L\infty}$ allows us to conclude.
\end{description}
\end{proofof}

We prove now Lemma \ref{lem:J2}, which states, with the notations introduced in  Section  \ref{sec:isen}
\[
J_2\leq C(1+t)^{d_k} \norma{\D{}^k U}_{\L2} Z^\e\,,
\]
where $d_k=(\beta+1)(\e-1)-g_k-2$ and $\beta=-g_1-\frac{d}{2}$.

\begin{proofof}{Lemma \ref{lem:J2}}
For $k=0$, $J_2=0$. We are considering here $k\geq 1$ ; $J_2$  is then a sum of terms  $\int \pt^k U\pt^l(\pi^\e)\pt^{k-l+1}\overline u\d{x}$ for $0\leq l\leq k-1$. The choice of initial conditions gives us $U\in \H{m}$, but we do not know if  $\D{}^2U\in \L\infty$.  So we can distinguish two cases: $m>2+d/2$ and $\D{}^2 U\in \L\infty$, or $m\leq 2+d/2$.
\vspace{0.2cm}

\textbf{Case $m>2+d/2$. } 
We study now different cases, after the values of  $k$ and $l$.
\begin{enumerate}
\item If  $k\geq 1$ and $l=0$, we use  Proposition \ref{prop:ubar} and Lemma \ref{lem:fp} to obtain the estimate
\begin{align*}
\int_{\reali^d} \pi^\e\pt^k \pi\pt^{k+1} \overline u\d{x} 
&\leq \norma{\pi^\e}_{\L\infty} \norma{\pt^k U}_{\L2} \norma{\pt^{k+1} \overline u}_{\L2} \\
&\leq C (1+t)^{(\beta+1)\e} Z^\e \norma{\D{}^k U}_{\L2} (1+t)^{d/2-k-2}\\
&\leq C(1+t)^{d_k}\norma{\D{}^k U}_{\L2}Z^\e\,,
\end{align*}
since $k-d/2+2=\beta+1+g_k+2$ we are done.
\item If $k\geq 2$ and $l=1$, we have to estimate
\begin{align*}
\int_{\reali^d} \pi^{\e-1}\pt \pi \pt^k\pi \pt^k \overline u\d{x}
&\leq \norma{\pt^k \pi}_{\L2}\norma{\pt^k \overline u}_{\L2}\norma{\pi}_{\L\infty}^{\e-1}\norma{\pt \pi}_{\L\infty}\\
&\leq  C(1+t)^{d/2-k-1} (1+t)^{(\beta+1)(\e-1)+\beta}Z^\e \norma{\pt^k U}_{\L2}\,,
\end{align*}
which concludes that case, since $d/2-k-1+\beta+(\beta+1)(\e-1)=(\beta+1)(\e-1)-g_k-2$.
\item If $k=3$ and $l=2$, 
\begin{align*}
\int_{\reali^d}\pt^3 \pi \pt^2(\pi^\e)\pt^2 \overline u \d{x} 
&\leq \norma{\pt^k \pi}_{\L2}\norma{\pt^2\overline u}_{\L\infty}\norma{\pt^2(\pi^\e)}_{\L2}\\
&\leq (1+t)^{-3+(\beta+1)(\e-1)-g_2}\norma{\pt^k U}_{\L2} Z^\e\,,
\end{align*}
and we are done as $g_2+3=g_3 +2$.
\item If $k>3$ and $l\in \lb 2,k-1\rb$, we use Lemma \ref{lem:gn2}. Denoting $q=2\frac{k-3}{l-2}$ and $q'=2\frac{k-3}{k-l-1}$ so that $1/q+1/q'=1/2$, we obtain:
\begin{align*}
&\int_{\reali^d} \pt^k \pi \pt^l(\pi^\e) \pt^{k-l+1} \overline u \d{x}\\
&\leq \norma{\pt^k U}_{\L2} \norma{\pt^l (\pi^\e)}_{\L{q}} \norma{\pt^{k-l+1} \overline u}_{ \L{{q'}}}\\
&\leq  \norma{\pt^k U}_{\L2} \norma{\D{}^2 (\pi^\e)}_{\L\infty}^{1-2/q} \norma{ \D{}^{k-1} (\pi^\e)}_{\L2}^{2/q} \norma{\D{}^2 \overline u}_{\L\infty}^{1-2/q'} \norma{\D{}^{k-1}\overline u}_{\L{2}}^{2/q'}\,.
\end{align*}
Since
\begin{align*}
\norma{\D{}^2\overline u}_{\L\infty}&\leq C (1+t)^{-3}\,,& \norma{\D{}^{k-1} \overline u}_{\L2}&\leq C(1+t)^{d/2-k}\,,\\
\end{align*}
and
\begin{align*}
\norma{\D{}^2 (\pi^\e)}_{\L\infty}&\leq C\left(\norma{\pi}_{\L\infty}^{\e-2}\norma{\D{\pi}}_{\L\infty}^2+ \norma{\pi}_{\L\infty}^{\e-1} \norma{ \D{}^2 \pi}_{\L\infty}\right)\\
&\leq C (1+t)^{(\beta+1)\e-2}Z^\e\,,\\
\norma{\D{}^{k-1}(\pi^\e)}_{\L2}
&\leq C\norma{\pi}_{\L\infty}^{\e-1}\norma{\D{}^{k-1}\pi}_{\L2}\\
&\leq C(1+t)^{(\beta+1)(\e-1)-g_k +1}Z^\e\,.
\end{align*}
we obtain
\begin{align*}
J_2 & \leq C (1+t)^{m_k}\norma{\D{}^k \pi}_{\L2}Z^\e\,,
\end{align*}
where
\begin{align*}
m_k&=-3\left(1-\frac{2}{q}\right)-\frac{2}{q}(1+\beta+g_k) +\frac{2}{q}((\beta+1)\e-2) \\
&+\left(1-\frac{2}{q}\right) ((\beta+1)(\e-1)-g_k+1)\\
&= (\beta+1)(\e-1)-g_k-2\\
&=d_k\,.
\end{align*}
\end{enumerate}
\vspace{0.2cm}

\textbf{Case $m\leq 2+d/2$. }\\
First, we note that the computations of the case $k\geq 1$ and $l=0$, $k\geq 2$ and $l=1$,  $k=3$ and $l=2$ are similar. There remains to treat the case $k>3$ and $2\leq l\leq k-1$. Since $k\leq m\leq 2+d/2$ we have necessarily $k\leq 3$, if $d=1$, $2$ or $3$, and we are done. \\
We assume now $d\geq 4$. 
Let us denote $h=\frac{1}{2}(k+1+d/2)>2$. Then we have $h\leq m$ and 
\begin{align*}
0&< h-l\leq \frac{d-1}{2}\,,& \frac{1}{2}&\leq h-(k+1-l)\leq \frac{d-1}{2}\,.
\end{align*}
We introduce $h_1=h-l$, $h_2=h-(k-l+1)$ and $1/q_1=1/2-h_1/d$,  $1/q_2=1/2-h_2/d$. Therefore $1/q_1+1/q_2=2$, which allows to use Hölder inequality
\begin{align*}
J_2(k,l)=\int_{\reali^d} \pt^k \pi \pt^l (\pi^\e)\pt^{k-l+1} \overline u\d{x}&\leq \norma{\pt^k \pi}_{\L2}\norma{\pt^l( \pi^\e)}_{\L{{q_1}}} \norma{\pt^{k-l+1} \overline u}_{\L{{q_2}}}\,. 
\end{align*}
Next, we apply Lemma \ref{lem:dn} to find
\[
J_2(k,l) \leq C\norma{\pt^k \pi}_{\L2}\norma{\D{}^n (\pi^\e)}_{\L2} \norma{\D{}^h \overline u}_{\L2}\,.
\]
Finally, we use Lemma \ref{lem:fp} and Proposition \ref{prop:ubar}
\[
J_2(k,l)\leq C\norma{\pt^k U}_{\L2} (1+t)^{d/2-h-1-g_h +(\beta+1)(\e-1)}Z^\e\,.
\]
As $d/2-h-1-g_h+(\beta+1)(\e-1)=(\beta+1)(\e-1)-g_k-2$, we are done.
\end{proofof}

\subsection{The Gagliardo-Nirenberg inequality and its consequences}
\subsubsection{The Gagliardo-Nirenberg inequality}
\begin{lemma}[Gagliardo-Nirenberg]\label{lem:GN} (See \cite[Prop. 3.5, p. 4]{Taylor3})
Let $r>0$, $i \in[0,r]$ and $z\in (\L\infty\cap\H{r})(\reali^d)$. Then $\pt^i z\in \L{{2r/i}}(\reali^d)$ and
\[
\norma{\pt^i z}_{\L{{{2r}/{i}}}}\leq C_{i,r} \norma{z}_{\L\infty}^{1-{i/r}} \norma{\D{}^r z}_{\L2}^{i/r}\,.
\]
\end{lemma}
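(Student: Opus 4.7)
The plan is to establish this classical Gagliardo--Nirenberg inequality by the standard induction-and-interpolation argument; since the statement explicitly cites Taylor's book \cite{Taylor3}, the proof I would write is essentially the textbook one kept self-contained.

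First, the endpoint cases $i=0$ and $i=r$ reduce to tautologies, so I would restrict attention to $1\leq i \leq r-1$ with integer $i,r$. The key step is a single integration by parts,
\[
\int_{\reali^d} |\pt^i z|^{2r/i}\,\d{x} = -\int_{\reali^d} \pt^{i-1}z\cdot\pt\bigl(|\pt^i z|^{2r/i-2}\,\pt^i z\bigr)\,\d{x},
\]
valid first for smooth compactly supported $z$ and then extended by a standard mollification argument. Expanding the derivative on the right and applying H\"older's inequality with the three exponents $2r/(i-1)$, $2r/(i+1)$, and $r/(r-i)$, whose reciprocals sum to $1$ and which give $\norma{|\pt^i z|^{2r/i-2}}_{\L{r/(r-i)}}=\norma{\pt^i z}_{\L{2r/i}}^{2r/i-2}$, one obtains after simplification the log-convex recurrence
\[
\norma{\pt^i z}_{\L{2r/i}}^{2} \leq C_{i,r}\,\norma{\pt^{i-1}z}_{\L{2r/(i-1)}}\,\norma{\pt^{i+1}z}_{\L{2r/(i+1)}}.
\]

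Setting $\phi(j)=\log\norma{\pt^j z}_{\L{2r/j}}$ with the endpoint conventions $\phi(0)=\log\norma{z}_{\L\infty}$ and $\phi(r)=\log\norma{\D{}^r z}_{\L2}$, the recurrence reads $\phi(i)\leq \tfrac12(\phi(i-1)+\phi(i+1))+C$, i.e.\ $\phi$ is a discrete subsolution of $-\Delta\phi\leq 2C$ on $\{0,1,\dots,r\}$. Comparing with the explicit affine-plus-parabolic barrier $\phi_{\mathrm{ref}}(i)=(1-i/r)\phi(0)+(i/r)\phi(r)+C\,i(r-i)$, which matches the boundary values and satisfies equality in the recurrence, yields $\phi(i)\leq \phi_{\mathrm{ref}}(i)$, that is
\[
\norma{\pt^i z}_{\L{2r/i}} \leq e^{C_{i,r}}\,\norma{z}_{\L\infty}^{1-i/r}\,\norma{\D{}^r z}_{\L2}^{i/r},
\]
which is the desired inequality.

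For non-integer $r>0$, I would reduce to the integer case by complex interpolation between $\L\infty$ and $\H r$, applying Stein's theorem to the analytic family of Bessel potentials $(I-\Delta)^{-s/2}$. The main obstacle in writing out a fully self-contained argument is tracking constants through this interpolation and justifying the integration by parts at the appropriate regularity; however, in every application made of this lemma in Section~\ref{sec:lem} only integer indices occur, so the discrete induction above suffices and the general statement may simply be invoked from \cite{Taylor3}.
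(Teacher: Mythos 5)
The paper offers no proof of this lemma at all --- it is quoted verbatim from \cite[Prop.~3.5]{Taylor3} --- so there is no in-paper argument to compare yours with; I can only assess your sketch on its own. It is the standard textbook proof and it is correct: the three H\"older exponents $2r/(i-1)$, $2r/(i+1)$ and $r/(r-i)$ indeed have reciprocals summing to $1$; the exponent of $\norma{\pt^i z}_{\L{{2r/i}}}$ produced by the third factor is exactly $2r/i-2$, so after absorbing it you obtain the log-convex recurrence $\norma{\pt^i z}_{\L{{2r/i}}}^2\leq C\,\norma{\pt^{i-1}z}_{\L{{2r/(i-1)}}}\norma{\pt^{i+1}z}_{\L{{2r/(i+1)}}}$; and your comparison function $i(r-i)$ has discrete Laplacian $-2$, so the discrete maximum principle closes the induction with constant $e^{Ci(r-i)}$. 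You also correctly flag the two genuine technical points (justifying the integration by parts and the a priori finiteness of the intermediate norms via mollification and density, and the fractional case). Since every invocation of the lemma in Section~\ref{sec:lem} and in Lemma~\ref{lem:fp} uses integer indices, your integer-index argument covers all actual uses, and deferring the fractional statement to \cite{Taylor3} is exactly what the paper itself does.
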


We deduce easily from Lemma \ref{lem:GN} the following result.
\begin{lemma}\label{lem:gn2}
Let $z\in \H{m}$ be such that $\D{}^2 z\in \L\infty$, then for all $k\in [4,m]$, for all $i\in [2,k]$, we have $\D{}^i z\in \L{q}$ for $q=2\frac{k-3}{i-2}$ and
\[
\norma{\pt^i z}_{\L{q}}\leq C\norma{\D{}^2 z}_{\L\infty}^{1-2/q}\norma{\D{}^{k-1} z}_{\L2}^{2/q}\,.
\]
\end{lemma}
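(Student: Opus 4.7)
The plan is to reduce the statement to a direct application of the Gagliardo--Nirenberg inequality (Lemma \ref{lem:GN}) by applying it not to $z$ itself, but to the shifted function $w := \D{}^2 z$. Under the hypotheses, $w$ belongs to $\L\infty$ by assumption, and $w\in \H{m-2}\subset\H{k-3}$ since $z\in \H{m}$ with $k\leq m$. Hence $w\in (\L\infty\cap\H{k-3})(\reali^d)$, which is precisely the input required by Lemma \ref{lem:GN} with $r = k-3$.

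Next, I would feed this $w$ into Lemma \ref{lem:GN} with derivative order $j := i-2$. Since $i\in[2,k-1]$ gives $j\in[0,k-3]=[0,r]$, this is legitimate, and we obtain
\[
\norma{\pt^{i-2} w}_{\L{{2(k-3)/(i-2)}}} \leq C\, \norma{w}_{\L\infty}^{1-(i-2)/(k-3)} \norma{\D{}^{k-3} w}_{\L2}^{(i-2)/(k-3)}.
\]
Unfolding $w=\D{}^2 z$, the left-hand side is an $i$-th order derivative of $z$ in $\L{q}$ with $q = 2(k-3)/(i-2)$, and $\D{}^{k-3} w$ is a $(k-1)$-th order derivative of $z$. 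Since $2/q = (i-2)/(k-3)$, the exponents on the right match exactly those in the claim, giving the desired bound.

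The boundary case $i=k$ (if one wants to include it) corresponds to $q<2$ only for $k>4$ and should be interpreted as the trivial identity when $i=k-1$, i.e.\ $q=2$; similarly $i=2$ gives $q=\infty$, which is the trivial $\norma{\D{}^2 z}_{\L\infty}\leq \norma{\D{}^2 z}_{\L\infty}$. Thus no separate argument is needed for these endpoints.

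There is essentially no obstacle: the entire content of the lemma is the observation that Lemma \ref{lem:GN} applied to $\D{}^2 z$ yields the stated scaling. The only point requiring care is the indexing shift $j=i-2$, $r=k-3$ and the verification that the computed Sobolev/Hölder exponent $2r/j$ coincides with the $q$ in the statement.
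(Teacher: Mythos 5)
Your proof is correct and is exactly the deduction the paper intends: the paper offers no written argument beyond ``we deduce easily from Lemma \ref{lem:GN}'', and applying that lemma to $w=\D{}^2 z$ with $r=k-3$ and derivative order $i-2$ (so that $2r/(i-2)=q$, $1-(i-2)/(k-3)=1-2/q$ and $\D{}^{k-3}w=\D{}^{k-1}z$) is precisely that one-line deduction. One small caveat: Lemma \ref{lem:GN} requires $i-2\leq k-3$, so your argument covers $i\in[2,k-1]$ but not the endpoint $i=k$ appearing in the statement, where $q=2(k-3)/(k-2)<2$ for every $k\geq 4$ (not only for $k>4$ as you write) and the interpolation does not apply; since the paper only ever invokes the lemma with $i\leq k-1$, this endpoint is best read as a misprint in the statement rather than a gap in your proof.
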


Thanks to the Sobolev imbedding (see \cite[p.4]{Taylor3}) and Lemma \ref{lem:GN} we also prove 
\begin{lemma}\label{lem:dn}
Let $\ell\in ]0,d/2[$ and $1/q=1/2-\ell/d$. There exists $C>0$ depending on $\ell,q,d$ such that for all $z\in \H{\ell}(\reali^d)$ we have
\[
\norma{z}_{\L{q}}\leq C \norma{\D{}^\ell z}_{\L2}\,.
\]
\end{lemma}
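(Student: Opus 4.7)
The statement is the homogeneous fractional Sobolev embedding $\dot{\mathbf{H}}^{\ell}(\reali^d)\hookrightarrow \L{q}(\reali^d)$ for $\ell\in(0,d/2)$ with $1/q=1/2-\ell/d$, a classical result. My plan is to prove it via Riesz potentials and the Hardy--Littlewood--Sobolev inequality, following the standard route; no new ingredient is really needed beyond citing the appropriate potential-theoretic estimate.

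The argument proceeds in two short steps. First, on the Fourier side I set $f=(-\Delta)^{\ell/2}z$, so that $z=I_{\ell}f$ where $I_{\ell}$ is the Riesz potential of order $\ell$ (defined by $\widehat{I_\ell f}(\xi)=|\xi|^{-\ell}\hat f(\xi)$) and, by Plancherel, $\norma{f}_{\L2}$ equals $\norma{\D{}^{\ell}z}_{\L2}$ up to a dimensional constant. Second, I invoke the Hardy--Littlewood--Sobolev theorem: for $\ell\in(0,d/2)$ and $1/q=1/2-\ell/d$, the operator $I_\ell$ is bounded from $\L2(\reali^d)$ to $\L{q}(\reali^d)$. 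Composing these two facts yields $\norma{z}_{\L{q}}\leq C\norma{\D{}^{\ell}z}_{\L2}$, which is exactly the desired inequality, with constant depending only on $\ell$, $q$ and $d$.

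An alternative path, closer to the tools already assembled in the paper, is to combine the integer-order Sobolev embedding from \cite{Taylor3} with the Gagliardo--Nirenberg interpolation of Lemma~\ref{lem:GN}: when $\ell\in\naturali$ with $\ell<d/2$ the embedding is classical, and a general fractional $\ell$ is then recovered by interpolation between two consecutive integer cases. The only subtle point is precisely the treatment of non-integer $\ell$; in the integer case the result is immediate, and the fractional extension is either an application of the Riesz-potential estimate above or a routine interpolation argument, so no serious obstacle is expected.
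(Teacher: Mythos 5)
Your argument is correct, but it follows a genuinely different route from the paper. You represent $z=I_\ell f$ with $f=(-\Delta)^{\ell/2}z$, identify $\norma{f}_{\L2}$ with $\norma{\D{}^\ell z}_{\L2}$ via Plancherel, and conclude by the Hardy--Littlewood--Sobolev boundedness of the Riesz potential $I_\ell:\L2\to\L{q}$. The paper instead starts from the \emph{inhomogeneous} embedding $\norma{z}_{\L{q}}\leq C(\norma{z}_{\L2}+\norma{\D{}^\ell z}_{\L2})$ quoted from \cite{Taylor3} and removes the lower-order term by a scaling argument: applying the inequality to $z_\lambda(x)=z(\lambda x)$, using $\norma{z_\lambda}_{\L{q}}=\lambda^{-d/q}\norma{z}_{\L{q}}$ and $\norma{\D{}^\ell z_\lambda}_{\L2}=\lambda^{\ell-d/2}\norma{\D{}^\ell z}_{\L2}$, and optimising over $\lambda$ (the exponent relation $d/q-d/2=-\ell$ is exactly what makes the two sides scale compatibly). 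Your approach buys the result directly from a single deep harmonic-analysis theorem and makes the homogeneity transparent; the paper's approach is more elementary given the reference it already cites, needing only the standard embedding plus dimensional analysis, and it sidesteps any discussion of how $I_\ell$ acts on $\L2$. Two small points to keep in mind: since the lemma is invoked later with non-integer $\ell$ (e.g.\ $h_1=h-l$ with $h=\frac12(k+1+d/2)$), the norm $\norma{\D{}^\ell z}_{\L2}$ must be read as the fractional seminorm $\norma{\,|\xi|^\ell\hat z\,}_{\L2}$, which is precisely where your Plancherel identification is exact; and your closing ``alternative path'' by interpolation between consecutive integer orders is only sketched and would need a real interpolation argument to be complete, but it is not needed since your main argument already closes the proof.
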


\begin{proof}
The space $\H{\ell}(\reali^d)$ is endowed with the norm  $\norma{\cdot}_{\L2}+\norma{\D{}^\ell\cdot}_{\L2}$. The Sobolev imbedding between  $\H{\ell}$ and $\L{q}$  can the be written, for a given  $C>0$,
\begin{align}
\norma{z}_{\L{q}}\leq C(\norma{z}_{\L2}+\norma{\D{}^\ell z}_{\L2})\,, \qquad \textrm{  for all } z\in \H{\ell}\,.\label{inj}
\end{align}
Let us define now, for  $z\in \H{\ell}$, $\lambda\in \rpic$, the function $z_\lambda\in \H\ell$ such that $z_\lambda(x)=z(\lambda x)$. Applying  \Ref{inj} to $z_\lambda$,  and noting that  
\begin{align}
\norma{z_\lambda}_{\L{q}}&=\lambda^{\frac{-d}{q}}\norma{z}_{\L{q}}\,,&\norma{\D{}^\ell z_\lambda}_{\L2}&=\lambda^{\ell - \frac{d}{2}} \norma{\D{}^\ell z}_{\L2}\,,\label{zlambda}
\end{align}
we obtain
\[
\norma{z}_{\L{q}}\leq C\lambda^{\frac{d}{q} -\frac{d}{2}} (\norma{z}_{\L2}+\lambda^\ell\norma{\D{}^\ell z}_{\L2})\,,
\]
where, by definition $\frac{d}{q} -\frac{d}{2}={-\ell}$. Consequently, introducing $\lambda=\left(\frac{\norma{z}_{\L2}}{\norma{\D{}^\ell z}_{\L2}} \right)^{1/\ell}$, we have
\[
\norma{z}_{\L{q}}\leq 2C\norma{\D{}^\ell z}_{\L2}\,.
\]
\end{proof}

Similarly,
\begin{lemma}\label{lm:inj} 
Let $p>d/2$. There exists $C>0$ such that for all $z\in \H{p}(\reali^d)$
\[
\norma{z}_{\L\infty}\leq C\norma{z}_{\L2}^{1-\frac{d}{2p}}\norma{ \D{}^p z}_{\L2}^{\frac{d}{2p}}\,.
\]
\end{lemma}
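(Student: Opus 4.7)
The plan is to mimic the scaling argument used in the proof of Lemma \ref{lem:dn}, replacing the Sobolev embedding $\H{\ell}\hookrightarrow \L{q}$ by the embedding $\H{p}\hookrightarrow \L{\infty}$ that holds precisely when $p>d/2$. The starting point is the classical inequality
\[
\norma{z}_{\L\infty}\leq C\bigl(\norma{z}_{\L2}+\norma{\D{}^p z}_{\L2}\bigr),\qquad z\in \H{p}(\reali^d),
\]
which one gets, for instance, from Fourier inversion: $\widehat{z}\in \L1$ because $(1+|\xi|^p)^{-1}\in \L2$ when $p>d/2$, and then $\norma{z}_{\L\infty}\leq \norma{\widehat z}_{\L1}$ can be estimated by Cauchy--Schwarz against $(1+|\xi|^p)^{-1}$.

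The second step is to homogenise this inequality by scaling. For $\lambda>0$ I set $z_\lambda(x)=z(\lambda x)$. Then $\norma{z_\lambda}_{\L\infty}=\norma{z}_{\L\infty}$, while $\norma{z_\lambda}_{\L2}=\lambda^{-d/2}\norma{z}_{\L2}$ and $\norma{\D{}^p z_\lambda}_{\L2}=\lambda^{p-d/2}\norma{\D{}^p z}_{\L2}$, exactly as in \eqref{zlambda}. Applying the embedding to $z_\lambda$ therefore gives
\[
\norma{z}_{\L\infty}\leq C\lambda^{-d/2}\bigl(\norma{z}_{\L2}+\lambda^{p}\norma{\D{}^p z}_{\L2}\bigr),\qquad\lambda>0.
\]

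The third step is the optimisation: I choose $\lambda$ so that the two terms on the right balance, namely $\lambda=\bigl(\norma{z}_{\L2}/\norma{\D{}^p z}_{\L2}\bigr)^{1/p}$ (assuming $\norma{\D{}^p z}_{\L2}\neq 0$; otherwise $z$ is a polynomial in $\L2$, hence zero, and the inequality is trivial). Substituting this value back and simplifying the exponents gives
\[
\norma{z}_{\L\infty}\leq 2C\,\norma{z}_{\L2}^{1-\frac{d}{2p}}\norma{\D{}^p z}_{\L2}^{\frac{d}{2p}},
\]
which is the claimed inequality.

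There is no real obstacle here: the only point to be slightly careful about is the $\L\infty$ Sobolev embedding for non-integer $p$, but since $\H{p}$ is defined via Bessel potentials (or equivalently via Fourier multipliers), the argument via $\widehat z\in \L1$ works verbatim for any real $p>d/2$. Everything else is just the same dilation trick as in Lemma \ref{lem:dn}.
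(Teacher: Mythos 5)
Your proof is correct and follows exactly the paper's own argument: the continuous embedding $\H{p}(\reali^d)\subset\L\infty(\reali^d)$, applied to the dilated function $z_\lambda$, followed by optimisation in $\lambda$. The only additions are the (welcome but inessential) justification of the embedding via Fourier inversion and the remark handling the degenerate case $\norma{\D{}^p z}_{\L2}=0$.
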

\begin{proof}
We use now the continuous imbedding $\H{p}(\reali^d)\subset \L\infty(\reali^d)$.  Thus, there exists $C>0$ such that 
\begin{align*}
\norma{z}_{\L{\infty}}\leq C(\norma{z}_{\L2}+\norma{\D{}^p z}_{\L2})\,, \qquad \textrm{  for all } z\in \H{p}\,.
\end{align*}
Applying the inequality to   $z_\lambda : x \mapsto z( \lambda x)$, we obtain, since  $z_\lambda$ satisfies $\norma{z_\lambda}_{\L\infty}=\norma{z}_{\L\infty}$ and \Ref{zlambda},
\[
\norma{z}_{\L\infty}\leq C \lambda^{\frac{-d}{2}} (\norma{z}_{\L2}+\lambda^p\norma{\D{}^p z}_{\L2})\,.
\]
Taking $\lambda=\left(\frac{\norma{z}_{\L2}}{\norma{\D{}^p z}_{\L2}} \right)^{1/p}$, we have finished the proof.
\end{proof}

\subsubsection{Estimates}
\begin{lemma}\label{lem:Z}
Let $m>1+d/2$, $U\in \H{m}(\reali^d)$, $r, a\in \reali$  and  $Z$  be the norm defined by (\ref{eq:normez}) :
\[
Z(t)=\sum_{k=0}^m (1+t)^{g_k}\norma{\D{}^k U(t)}_{\L2}\,,
\]
 with $g_k=k+r-a$. Then we have:
\begin{enumerate}
\item  $\norma{U(t)}_{\L\infty}\leq C(1+t)^{\beta+1}Z(t)$,
\item $\norma{\D{U}(t)}_{\L\infty}\leq C(1+t)^{\beta}Z(t)$,
\item If $m>2+d/2$, then $\norma{\D{}^2 U(t)}_{\L\infty}\leq C(1+t)^{\beta-1}Z(t)$,
\end{enumerate}
with $\beta =-g_1-d/2$.
\end{lemma}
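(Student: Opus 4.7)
The plan is to apply Lemma \ref{lm:inj} (the interpolated Sobolev embedding just established) to $U$, $\D{U}$, and $\D{}^2 U$ respectively, in each case choosing the order $p$ of the highest derivative so that the $\L2$-norms that appear are precisely those controlled by $Z(t)$. All three claims then follow from the same exponent-counting, with the only nontrivial hypothesis being that $p > d/2$ holds in each application.

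For the first bound, I would apply Lemma \ref{lm:inj} with $p=m$ (allowed since $m>1+d/2>d/2$) to get
\[
\norma{U(t)}_{\L\infty}\leq C\,\norma{U(t)}_{\L2}^{1-d/(2m)}\norma{\D{}^m U(t)}_{\L2}^{d/(2m)}.
\]
The definition of $Z$ gives $\norma{\D{}^k U(t)}_{\L2}\leq (1+t)^{-g_k}Z(t)$ for every $k\in\{0,\dots,m\}$, so the right-hand side is bounded by $C(1+t)^{\theta_0}Z(t)$ with
\[
\theta_0=-g_0\Bigl(1-\tfrac{d}{2m}\Bigr)-g_m\tfrac{d}{2m}=-g_0+\tfrac{d}{2m}(g_0-g_m)=-g_0-\tfrac{d}{2},
\]
using $g_m-g_0=m$. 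Since $g_0-g_1=-1$, this is exactly $-g_1-d/2+1=\beta+1$.

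For the second bound, note that $m>1+d/2$ implies $m-1>d/2$, so Lemma \ref{lm:inj} applies to $z=\D{U}$ with $p=m-1$ and yields
\[
\norma{\D{U}(t)}_{\L\infty}\leq C\,\norma{\D{U}(t)}_{\L2}^{1-d/(2(m-1))}\norma{\D{}^m U(t)}_{\L2}^{d/(2(m-1))}.
\]
The same computation gives the exponent $-g_1+\frac{d}{2(m-1)}(g_1-g_m)=-g_1-d/2=\beta$. For the third bound, under the stronger hypothesis $m>2+d/2$ we have $m-2>d/2$, so Lemma \ref{lm:inj} applied to $z=\D{}^2 U$ with $p=m-2$ produces the exponent $-g_2-d/2=\beta-1$.

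There is no real obstacle here: the proof is a direct consequence of Lemma \ref{lm:inj} combined with the definition of $Z$, and the delicate point is only to check that the hypothesis $p>d/2$ of the embedding holds in each case, which is precisely why the third estimate requires the additional assumption $m>2+d/2$ while the first two only need $m>1+d/2$.
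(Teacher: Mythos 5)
Your proof is correct and follows exactly the paper's argument: apply Lemma \ref{lm:inj} to $U$, $\D{U}$, $\D{}^2 U$ with $p=m$, $m-1$, $m-2$ respectively, bound each $\L2$-norm by $(1+t)^{-g_k}Z(t)$, and count exponents using $g_k=k+r-a$. Nothing to add.
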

\begin{proof}
1. Applying Lemma \ref{lm:inj} to  $U$, we obtain:
\[
\norma{U}_{\L\infty}\leq C\norma{U}_{\L2}^{1-\frac{d}{2m}}\norma{ \D{}^m U}_{\L2}^{\frac{d}{2m}}\,.
\]
But we also have
\begin{align*}
\norma{U}_{\L2}&\leq (1+t)^{-g_0}Z\,,& \norma{\D{}^m U}_{\L2}&\leq (1+t)^{-g_m}Z\,,
\end{align*}
so 
\[
\norma{U}_{\L\infty} \leq C(1+t)^sZ\,,
\]
where $s=-g_0(1-\frac{d}{2m})-(g_0+m)\frac{d}{2m}=-g_0 -\frac{d}{2}=\beta+1$.

2. Applying Lemma \ref{lm:inj} to $\D{U}$, with $p=m-1$, we have
\[
\norma{\D{U}}_{\L\infty}\leq C\norma{\D{U}}_{\L2}^{1-\frac{d}{2(m-1)}}\norma{ \D{}^m U}_{\L2}^{\frac{d}{2(m-1)}}\,.
\]
In the same way as before, we obtain
\[
\norma{\D{U}}_{\L\infty}\leq C(1+t)^sZ
\]
where 
\begin{align*}
s&=-g_1(1-\frac{d}{2(m-1)})-(g_1+m-1)\frac{d}{2(m-1)}\\
&=-g_1-\frac{d}{2}=\beta\,.
\end{align*}

3. Applying Lemma  \ref{lm:inj} to  $\D{}^2U$ with $p=m-2$, which is possible since $m-2>d/2$, we finally prove the third inequality.
\end{proof}

\begin{lemma}\label{lem:pdt}
Let $f,\phi \in \H{m}\cap\L\infty(\reali^d)$, let $\alpha\in \naturali^d$ such that $\modulo{\alpha}=k\leq m$. Then 
\[
\norma{\pt^\alpha(f\phi)}_{\L2}\leq C(\norma{f}_{\L\infty}\norma{\D{}^k \phi}_{\L2}+  \norma{\phi}_{\L\infty}\norma{\D{}^k f}_{\L2} )\,.
\]
\end{lemma}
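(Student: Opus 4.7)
The plan is to reduce the estimate to the Gagliardo--Nirenberg inequality (Lemma \ref{lem:GN}) already established above. I first expand $\pt^\alpha(f\phi)$ via the Leibniz formula
\[
\pt^\alpha(f\phi) = \sum_{\beta \leq \alpha} \binom{\alpha}{\beta}\, \pt^\beta f \; \pt^{\alpha - \beta}\phi,
\]
and bound each summand separately in $\L2$. For a multi-index $\beta$ of size $\modulo{\beta}=j$, the two extreme cases $j=0$ and $j=k$ produce directly the two terms on the right of the desired inequality by a trivial $\L\infty\times\L2$ H\"older estimate.

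For an intermediate index $1\leq j\leq k-1$, I would apply H\"older's inequality with the conjugate pair of exponents $2k/j$ and $2k/(k-j)$ (so that $j/(2k)+(k-j)/(2k)=1/2$), giving
\[
\norma{\pt^\beta f\,\pt^{\alpha-\beta}\phi}_{\L2} \leq \norma{\pt^\beta f}_{\L{{2k/j}}}\,\norma{\pt^{\alpha-\beta}\phi}_{\L{{2k/(k-j)}}}.
\]
Lemma \ref{lem:GN}, applied with $r=k$, $i=j$ to $f$ and with $r=k$, $i=k-j$ to $\phi$, then yields
\[
\norma{\pt^\beta f}_{\L{{2k/j}}} \leq C\norma{f}_{\L\infty}^{1-j/k}\norma{\D{}^k f}_{\L2}^{j/k},\qquad \norma{\pt^{\alpha-\beta}\phi}_{\L{{2k/(k-j)}}} \leq C\norma{\phi}_{\L\infty}^{j/k}\norma{\D{}^k\phi}_{\L2}^{1-j/k}.
\]
Setting $A=\norma{f}_{\L\infty}\norma{\D{}^k\phi}_{\L2}$ and $B=\norma{\phi}_{\L\infty}\norma{\D{}^k f}_{\L2}$, the product of these two bounds is exactly $C\,A^{1-j/k}B^{j/k}$, which is at most $C(A+B)$ by Young's inequality (or by the elementary bound $x^\theta y^{1-\theta}\leq x+y$ valid for $\theta\in[0,1]$ and $x,y\geq 0$). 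Summing over the finitely many multi-indices $\beta\leq\alpha$ yields the claim.

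There is no serious obstacle: this is the classical Moser-type product estimate. The only minor check required is that the Gagliardo--Nirenberg exponents $j$ and $k-j$ both lie in $[0,k]$ with $k\leq m$, so that Lemma \ref{lem:GN} is applicable --- which is automatic. The key structural observation is that the H\"older exponents must combine in the reciprocal sense to $1/2$ (since the product has to sit in $\L2$), and this forces the weights $j/k$ and $1-j/k$ from Gagliardo--Nirenberg to distribute symmetrically across $f$ and $\phi$, producing exactly the two-term bound in the statement.
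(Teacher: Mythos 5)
Your proof is correct. The paper actually states Lemma \ref{lem:pdt} without proof (it is the classical Moser product estimate), and your argument --- Leibniz expansion, H\"older with exponents $2k/j$ and $2k/(k-j)$, Gagliardo--Nirenberg from Lemma \ref{lem:GN}, then Young's inequality --- is precisely the standard derivation and is the same technique the paper itself deploys in the proof of Lemma \ref{lem:fp} for non-integer $\e$. All hypotheses of Lemma \ref{lem:GN} are indeed satisfied since $f,\phi\in\H{k}\cap\L\infty$ for $k\leq m$, so there is nothing to add.
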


\begin{lemma}\label{lem:fp}
Let $f\in (\L\infty \cap\H{m})(\reali^d)$. If $\e\in \naturali$ and $\e\geq 2$, or $\e\in \reali$ and $\e\in [m,+\infty[$, we have $f^\e\in \H{m}(\reali^d)$ and, for all $\alpha\in \naturali^d$ such that $\modulo{\alpha}=k\leq m$, we have
\begin{equation}\label{eq:lemfp}
\norma{\pt^\alpha f^\e}_{\L2}\leq C \norma{f}_{\L\infty}^{\e-1} \norma{\D{}^k f}_{\L2}\,,
\end{equation}
where $C>0$  is a constant independent from  $f$, $\alpha$, $\e$.
\end{lemma}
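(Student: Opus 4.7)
The plan is to expand $\pt^\alpha(f^\e)$ as a sum of multilinear expressions in the partial derivatives of $f$, then estimate each contribution by combining Hölder's inequality with the Gagliardo–Nirenberg inequality (Lemma \ref{lem:GN}). The case $k=0$ is immediate: $\norma{f^\e}_{\L2}\leq \norma{f}_{\L\infty}^{\e-1}\norma{f}_{\L2}$. So I concentrate on $1\leq k\leq m$.

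For the integer case $\e\in\naturali$, $\e\geq 2$, the Leibniz formula applied to the product $f^\e=f\cdot f\cdots f$ gives
\[
\pt^\alpha(f^\e)=\sum_{\alpha_1+\cdots+\alpha_\e=\alpha}\binom{\alpha}{\alpha_1,\ldots,\alpha_\e}\pt^{\alpha_1}f\cdots\pt^{\alpha_\e}f.
\]
For a fixed tuple let $j$ be the number of indices $i$ with $k_i:=|\alpha_i|\geq 1$; the remaining $\e-j$ factors are just $f$, bounded pointwise by $\norma{f}_{\L\infty}^{\e-j}$. Since $\sum_{k_i\geq 1} k_i=k$, applying Hölder's inequality with exponents $q_i=2k/k_i$ (which satisfy $\sum 1/q_i=1/2$) yields
\[
\Big\|\prod_{k_i\geq 1}\pt^{\alpha_i}f\Big\|_{\L2}\leq \prod_{k_i\geq 1}\norma{\pt^{k_i}f}_{\L{q_i}}.
\]
Lemma \ref{lem:GN} then bounds each factor by $C\norma{f}_{\L\infty}^{1-k_i/k}\norma{\D{}^k f}_{\L2}^{k_i/k}$, and the exponents telescope so that the full product equals $C^{j}\norma{f}_{\L\infty}^{j-1}\norma{\D{}^k f}_{\L2}$. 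Multiplying by $\norma{f}_{\L\infty}^{\e-j}$ and summing over the (finitely many) tuples delivers the claimed inequality \Ref{eq:lemfp}.

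For the real case $\e\in[m,+\infty[$, the function $x\mapsto x^\e$ is no longer a polynomial, so I use the Faà di Bruno formula for higher-order partial derivatives of a composition, obtaining
\[
\pt^\alpha(f^\e)=\sum_{j=1}^{k}c_j(\e)\,f^{\e-j}\!\!\sum_{\substack{\beta_1+\cdots+\beta_j=\alpha\\ |\beta_i|\geq 1}}C_{\beta_1,\ldots,\beta_j}\,\pt^{\beta_1}f\cdots\pt^{\beta_j}f,
\]
where $c_j(\e)=\e(\e-1)\cdots(\e-j+1)$. The hypothesis $\e\geq m\geq k\geq j$ is used exactly here: it guarantees $\e-j\geq 0$, so $f^{\e-j}$ is well-defined (using that $f=\pi\geq 0$ in the intended applications, cf.\ Proposition \ref{prop:rholeqb}) and bounded pointwise by $\norma{f}_{\L\infty}^{\e-j}$. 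The estimate of the multilinear factor $\pt^{\beta_1}f\cdots\pt^{\beta_j}f$ in $\L2$ then proceeds exactly as in the integer case via Hölder + Gagliardo–Nirenberg, yielding again a contribution bounded by $C\norma{f}_{\L\infty}^{\e-1}\norma{\D{}^k f}_{\L2}$.

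The main subtlety—and the reason for the dichotomy in the hypotheses—is precisely the non-polynomial character of $x\mapsto x^\e$ when $\e$ is non-integer. Without the threshold $\e\geq m$ one would meet factors $f^{\e-j}$ with $\e-j<0$ arising in Faà di Bruno, which blow up at points where $f$ vanishes; no pointwise bound in terms of $\norma{f}_{\L\infty}$ is then available. Once this obstacle is side-stepped by the hypothesis $\e\geq m$, the argument collapses into the same Hölder/Gagliardo–Nirenberg bookkeeping used in the integer case.
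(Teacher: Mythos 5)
Your proof is correct and follows essentially the same route as the paper's: Fa\`a di Bruno expansion of $\pt^\alpha(f^\e)$, the pointwise bound $\norma{f^{\e-j}}_{\L\infty}\leq\norma{f}_{\L\infty}^{\e-j}$ made legitimate by $\e\geq m\geq k\geq j$, and H\"older combined with the Gagliardo--Nirenberg inequality (Lemma \ref{lem:GN}) on the multilinear factor, with the exponents telescoping to give \Ref{eq:lemfp}. The only (minor) divergence is in the integer case, where the paper inducts on $\e$ via the bilinear product estimate of Lemma \ref{lem:pdt} whereas you carry out the full multilinear Leibniz expansion directly; this is the same computation specialized to a monomial outer function, and your remark that non-integer $\e$ requires $f\geq 0$ for $f^{\e-j}$ to make sense is a correct observation that the paper leaves implicit.
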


\begin{proof}
For $\e\in \naturali$, $\e\geq 2$,  we proceed by iteration on $\e$, using Lemma \ref{lem:pdt}.

For $\e\in [m,+\infty[$, we have
\[
\pt^{\alpha}(f^\e(x))=\sum_{1\leq j\leq k} \sum _{ \beta_1+\ldots+\beta_j =\alpha \atop  \modulo{\beta_i}=b_i\geq 1} c_{\alpha,\beta}f^{\e-j} \pt^{\beta_1}f \ldots \pt^{\beta_j} f\,.
\]
Then we take $\pt^{\beta_i} f\in \L{{2\frac{k}{b_i}}}$, we apply Hölder inequality and Lemma \ref{lem:GN} to obtain:
\begin{align*}
\norma{\pt^{\alpha}(f^\e)}_{\L2}&\leq C \sum_{j=1}^k \norma{f^{\e-j}}_{\L\infty}\prod_{i=1\atop \sum b_i=k}^j \norma{\pt^{b_i}_{\beta_i} f}_{\L{{2\frac{k}{b_i}}}}\\
&\leq C\sum_{j=1}^k \norma{f^{\e-j}}_{\L\infty}\prod_{i=1 \atop \sum b_i=k}^j \norma{f}_{\L\infty}^{1-b_i/k}\norma{\D{}^k f}_{\L2}^{b_i/k}\\
&\leq C\norma{f}^{\e-1}_{\L\infty} \norma{\D{}^k f}_{\L2}\,,
\end{align*}
using besides that for all $j\in \lb 1,m\rb$, $\norma{f^{\e-j}}_{\L\infty}\leq \norma{f}^{\e-j}_{\L\infty}$ since  $\e-j\geq 0$.
\end{proof}

{\small \paragraph*{Acknowledgement :} The author thanks Sylvie Benzoni-Gavage for initiating this work and for providing useful advises.

\bibliographystyle{abbrv}
\bibliography{ref}}

\end{document}